\title[minimal surfaces in the doubled Schwarzschild $3$-manifold]{complete minimal surfaces of finite topology in the doubled Schwarzschild $3$-manifold}
\author[J. Choe]{Jaigyoung Choe}
\address[]{Jaigyoung Choe, Korea Institute for Advanced Study, 85 Hoegiro, Dongdaemun-gu, Seoul 02455, Republic of Korea}
\email{choe@kias.re.kr}
\author[J. Lee]{Jaehoon Lee}
\address[]{Jaehoon Lee, School of Mathematics, Korea Institute for Advanced Study, 85 Hoegiro, Dongdaemun-gu, Seoul 02455, Republic of Korea}
\email{jaehoonlee@kias.re.kr}
\author[E. Yeon]{Eungbeom Yeon}
\address[]{Eungbeom Yeon, Department of Mathematical Sciences, Pusan National University, Busan 46241, Republic of Korea}
\email{ebeom.yeon@pusan.ac.kr}
\begin{document}

\newtheorem{theorem}{theorem}[section]
\newtheorem{thm}[theorem]{Theorem}
\newtheorem{lemma}[theorem]{Lemma}
\newtheorem{cor}[theorem]{Corollary}
\newtheorem{prop}[theorem]{Proposition}
\newtheorem{rmk}[theorem]{Remark}
\newtheorem{Question}[theorem]{Question}
\newtheorem{conj}[theorem]{Conjecture}
\newtheorem*{mainthm1}{Main Theorem}
\newtheorem*{mainthm2}{Theorem 2}

\renewcommand{\theequation}{\thesection.\arabic{equation}}
\newcommand{\RNum}[1]{\uppercase\expandafter{\romannumeral #1\relax}}
\newcommand{\R}{\mathbb{R}}
\newcommand{\C}{\mathbb{C}}
\newcommand{\grad}{\nabla}
\newcommand{\laplacian}{\Delta}
\newcommand{\tgamma}{\tilde{\gamma}}
\newcommand{\ttau}{\tilde{\tau}}
\newcommand{\pp}{\Phi}
\newcommand{\tkappa}{\tilde{\kappa}}
\renewcommand{\d}{\textup{d}}
\newlength{\mywidth}
\newcommand\bigfrown[2][\textstyle]{\ensuremath{%
  \array[b]{c}\text{\resizebox{\mywidth}{.7ex}{$#1\frown$}}\\[-1.3ex]#1#2\endarray}}
\newcommand{\arc}[1]{{%
  \setbox9=\hbox{#1}%
  \ooalign{\resizebox{\wd9}{\height}{\texttoptiebar{\phantom{A}}}\cr#1}}}

\subjclass[2020]{53A10, 53C42}
\keywords{Doubled Schwarzschild $3$-manifold, minimal surface, desingularization, reflection principle}

\begin{abstract}
We construct a complete embedded minimal surface with arbitrary genus in the doubled Schwarzschild $3$-manifold. A
classical desingularization method is used for the construction.
\end{abstract}

\maketitle

\section{Introduction}\label{intro}
\setcounter{equation}{0}
The Schwarzschild manifold is a solution to the Einstein equation, representing an important example of asymptotically flat manifolds. Throughout history, intensive research on minimal submanifolds in these manifolds has yielded significant results in geometry, including the positive mass theorem. Hence, investigating various types of minimal submanifolds in asymptotically flat manifolds is of great importance. However, only a limited number of examples are currently known, and in particular, no complete minimal surfaces with positive genus have been discovered in the (doubled) Schwarzschild $3$-manifold.

In this paper, we present the construction of new complete embedded minimal surfaces in the doubled Schwarzschild $3$-manifold $\widehat{M}$ with arbitrary genus:
\begin{mainthm1}[Theorem \ref{mainthm}]
For each integer $\tau\geq1$, there exists a complete embedded minimal surface $\Sigma_{\tau}\subset\widehat{M}$ of genus $\tau$. $\Sigma_\tau$ has finite total curvature and quadratic area growth, and is asymptotic to a totally geodesic plane.
\end{mainthm1}
The key idea in our construction is to desingularize the union of the horizon and the totally geodesic plane along the intersection curve using an appropriate reflection method in $\widehat{M}$. This desingularization technique has been utilized  in various studies (\cite{Choe, HM2, HMW,L}). It is very interesting to compare our existence result with some non-existence results in the Schwarzschild $3$-manifold recorded in the literature (\cite{C, CCE, CM, SY}).

This paper is organized as follows. Section \ref{pre} provides historical remarks on the subject, along with fundamental facts about the doubled Schwarzschild $3$-manifold. In Section \ref{sec3}, we discuss the geometric details of the area-minimizing disk, offering insights into the solution of Plateau's problem for the given contour. Section \ref{sec4} focuses on the curvature estimate of the area-minimizing disk, while the focus shifts to the limit behavior in Section \ref{sec5}. Finally, in Section \ref{sec6}, we construct the desired surfaces by patching together isometric copies of the limit surface.

\section*{Acknowledgements}
JC supported in part by RS-2023-00246133; JL supported in part by a KIAS Individual Grant MG086401 at Korea Institute for Advanced Study; EY supported in part by National Research Foundation of Korea NRF-2022R1C1C2013384 and NRF-2021R1A4A1032418.


\section{Preliminaries}\label{pre}
\setcounter{equation}{0}
\subsection{Historical remarks}
The Riemannian Schwarzschild manifold has been extensively studied throughout history as it represents a solution to the Einstein field equation. Mathematically, the Riemannian Schwarzschild manifold $(M^n,g^n_{sch})$ is defined as follows:
\begin{align*}
M^n=\left\{ x \in \mathbb{R}^n : |x| \ge \left( \frac{m}{2}\right)^{\frac{1}{n-2}} \right\},\ g^n_{Sch}=\left( 1+\frac{m}{2|x|^{n-2}}\right)^{\frac{4}{n-2}}g_{\mathbb{R}^n}.
\end{align*}
This manifold exhibits spherical symmetry and shows an asymptotically flat behavior at infinity. The boundary of the manifold, known as the horizon, is a totally geodesic hypersurface in $M^n$.

The significance of minimal submanifolds in asymptotically flat manifolds was initially recognized in the pioneering work of Schoen and Yau \cite{SY}. They proved that asymptotically planar stable minimal hypersurfaces cannot exist in asymptotically Schwarzschildean manifolds. This result played a crucial role in establishing the well-known positive mass theorem, which has greatly influenced subsequent research. In \cite{C}, the author further generalized the result to show the non-existence of complete stable properly embedded minimal hypersurfaces in asymptotically flat manifolds. Naturally, these non-existence results raise the question of what kinds of minimal submanifolds would exist in $M^n$. As a matter of fact, examples of minimal submanifolds in $M^n$ are very rare in general.

Considering a $2$-dimensional complete embedded minimal end in $M^n$ with finite total curvature and quadratic area growth, the result of \cite{BR} implies that it must either be bounded or has a logarithmic growth. This leads to the intriguing question of whether complete minimal surfaces with such ends can exist. In \cite{CM}, non-existence results for minimal surfaces in asymptotically Schwarzschildean $3$-manifolds were given in this regard. The authors demonstrated that no minimal surfaces, perturbing the Euclidean catenoid, exists in asymptotically Schwarzschildean $3$-manifolds. Furthermore, \cite{CCE} established that any slab bounded by complete minimal surfaces in an asymptotically flat $3$-manifold must be a Euclidean slab if the manifold possesses the horizon and non-negative scalar curvature. These results consistently show that the presence of the horizon poses a substantial obstacle to the existence of complete minimal examples. However, in the absence of the horizon, complete minimal planes were obtained in \cite{CK}.

Given that the horizon obstructs the existence of minimal surfaces, it is worthwhile to consider the doubled Schwarzschild $3$-manifold without a boundary, which can be obtained by the inversion with respect to the horizon. In \cite{B}, Brendle showed the existence of a minimal sphere near the horizon. Additionally, as the inversion across the horizon is an isometry, examples of free boundary minimal surfaces with their boundaries supported on the horizon are closely related. Nonetheless, to date, no information has been provided regarding the existence of minimal surfaces with arbitrary genus.
\subsection{Doubled Schwarzschild manifolds}
Consider the Riemannian manifold $\widehat{M}=\mathbb{R}^3\backslash\{(0,0,0)\}$ equipped with the metric $\widehat{g}:=\left(1+\frac{m}{2|x|}\right)^{4}g_{\mathbb{R}^3}$, where $g_{\mathbb{R}^3}$ denotes the Euclidean metric and $|\cdot|$ represents the Euclidean norm. Let us define a map $I: \widehat{M}\to\widehat{M}$ given by
\begin{align*}
I(x)=\left(\frac{m}{2}\right)^{2}\cdot\frac{x}{|x|^2}, \ \forall x\in\mathbb{R}^3\backslash\{(0,0,0)\},
\end{align*}
which is an inversion with respect to the Euclidean sphere $\left\{x\in\mathbb{R} ^3:|x|=\frac{m}{2}\right\}$. It can be shown that $I$ induces an isometry of $\widehat{M}$ by exploiting its conformal properties in Euclidean space. Specifically, $I$ is conformal in Euclidean space with the conformal factor $\left(\frac{m}{2}\right)^{4}\frac{1}{|x|^4}$ and satisfies the relation
\begin{align*}
\left(1+\frac{m}{2\left|I(x)\right|} \right)^{4}\cdot\left(\frac{m}{2}\right)^{4}\frac{1}{|x|^4}=\left(1+\frac{m}{2|x|}\right)^{4}.
\end{align*}
Furthermore, the Riemannian Schwarzschild $3$-manifold $(M, g)$ can be defined as
\begin{align*}
M=\left\{x\in\mathbb{R}^3:|x|\geq \tfrac{m}{2}\right\},\ g=\left(1+\tfrac{m}{2|x|}\right)^{4}g_{\mathbb{R}^3},
\end{align*}
and we can express as the union $\widehat{M}=M\cup I(M)$. Thus, it is appropriate to refer to $\widehat{M}$ as the \emph{doubled Schwarzschild $3$-manifold}.

Within $M$, there exist certain totally geodesic surfaces as fixed point sets under isometries, namely:
\begin{itemize}
\item[-] The horizon $\partial M^3=\left\{x\in\mathbb{R}^n:|x|=\frac{m}{2}\right\}$.
\item[-] Planes passing through the origin.
\end{itemize}
It can be proven that these are the only totally geodesic surfaces in $\widehat{M}$. In fact, a surface being totally geodesic in $\widehat{M}$ implies it is totally umbilic in Euclidean space, thus belonging to either a sphere or a plane. Among spheres and planes, it can be observed that only the horizon and planes passing through the origin are totally geodesic in the doubled Schwarzschild $3$-manifold. As the metric on $\widehat{M}$ possesses radial symmetry, reflections along totally geodesic planes can be regarded as isometries in $\widehat{M}$ and will be extensively utilized throughout this paper. Although minor modifications in the conformal factor would lead to the doubled Schwarzschild $n$-manifold, we confine our discussion to the $3$-dimensional case.

\section{Geometric properties of the area-minimizing disk}\label{sec3}
\setcounter{equation}{0}
We consider a particular contour $\Gamma_{\theta, R}$ ($\theta\in(0,\frac{\pi}{2}]$, $R>\frac{m}{2}$) in the Schwarzschild $3$-manifold $M$, which plays a fundamental role in this paper. For each $\alpha\in[0,2\pi)$, we denote $Q_\alpha$ as a totally geodesic plane passing through the origin and perpendicular to the vector $(-\sin\alpha, \cos\alpha, 0)$. Likewise, for each $\phi \in [0,\frac{\pi}{2})$, we denote $P_{\alpha,\phi}$ as a totally geodesic plane passing through the origin and perpendicular to $(-\sin\phi\cos\frac{\alpha}{2}, -\sin\phi\sin\frac{\alpha}{2},\cos\phi)$. Since all planes $P_{\alpha,0}$ are identical, we can simply refer to it as $P_0$. The contour $\Gamma_{\theta, R}$ comprises three distinct arcs: $\gamma_{0, R}, \gamma_{\theta, R}$, and $C_{\theta,R}$. Here, $\gamma_{\alpha, R}$ is a union of two geodesic arcs connecting $(0,0,\frac{m}{2})$, $(\frac{m}{2}\cos\alpha, \frac{m}{2}\sin\alpha, 0)$, and $(R\cos\alpha, R\sin\alpha, 0)$. Furthermore, $C_{\theta,R}$ is a circular arc connecting the endpoints of $\gamma_{0,R}$ and $\gamma_{\theta, R}$. It is worth noting that $\gamma_{\alpha, R}$ lies in the plane $Q_{\alpha}$. Figure 1 illustrates the contour $\Gamma_{\theta, R}$.
\begin{figure}
\centering
\begin{tikzpicture}
\shade[ball color = gray!40, opacity = 0.4] (0,0) circle (2cm);
\draw (-2,0) arc (180:360:2 and 0.65) node[anchor=west] {$$};
\draw[dashed] (2,0) arc (0:180:2 and 0.65);
\filldraw[blue] (0.8,-0.6) circle (0.8pt) node[anchor=north] {$$};
\filldraw[black] (0,2) circle (0.8pt) node[anchor=north] {$$};
\draw[blue, thick] (0,2).. controls (0.57, 1.5) and (0.798, 0.4) .. (0.8,-0.6);

\draw[red, thick] (0,2).. controls (1.95, 1.77) and (1.93, 0.4) .. (2,0);

\filldraw[black] (0,0) circle (0.8pt) node[anchor=south] {$O$};
\filldraw[red] (2,0) circle (0.8pt) node[anchor=south] {$$};
\draw[dashed] (0,0)--(0.8,-0.6);
\draw[blue ,thick, -] (0.8,-0.6)--(2.4, -1.8);
\draw[dashed] (0,0)--(2,0);
\draw[red,thick, -] (2,0)--(5,0);
\draw[green, thick] (2.4, -1.8).. controls (4.2, -1.6) and (5, -0.4) .. (5,0);
\filldraw[blue] (0.7,-1.2) circle (0pt) node[anchor=south] {$\gamma_{0,R}$};
\filldraw[red] (2.2,0.5) circle (0pt) node[anchor=south] {$\gamma_{\theta,R}$};
\filldraw[green] (4.6,-1.6) circle (0pt) node[anchor=south] {$C_{\theta, R}$};
\draw[gray, thin] (1.15,-0.6)--(1,-0.7);
\draw[gray, thin] (1.2,-0.75)--(1.45,-0.55);
\draw[gray, thin] (1.35,-0.85)--(2.25,-0.1);
\draw[gray, thin] (1.53,-0.95)--(2.5,-0.1);
\draw[gray, thin] (1.7,-1.08)--(2.8,-0.1);
\draw[gray, thin] (1.88,-1.22)--(3.15,-0.1);
\draw[gray, thin] (2.1,-1.34)--(3.52,-0.1);
\draw[gray, thin] (2.3,-1.53)--(4,-0.1);
\draw[gray, thin] (2.75,-1.55)--(4.5,-0.1);
\draw[gray, thin] (3.2,-1.53)--(4.9,-0.1);
\end{tikzpicture}

\caption{The contour $\Gamma_{\theta, R}$.}
\end{figure}
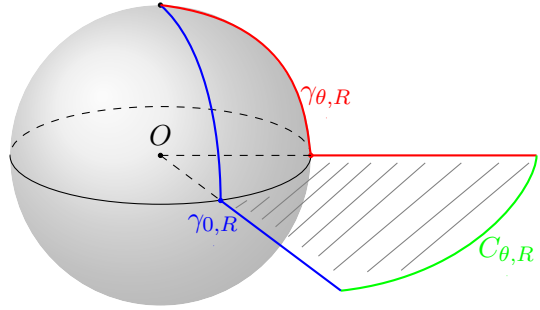

We note that the classical existence results on the Plateau problem guarantee the existence of a smooth area-minimizing disk $\Sigma_{\theta,R}$ with a prescribed boundary $\Gamma_{\theta, R}$. Additionally, we know that $\Sigma_{\theta, R}$ is embedded (see \cite{MY} for details) and has planar symmetry due to the symmetry of the contour across the plane $Q_{\frac{\theta}{2}}$. As $R$ tends to $\infty$, this Plateau solution serves as a fundamental piece that eventually becomes a complete embedded minimal surface after suitable patching using isometries. We record the following characteristics of $\Sigma_{\theta, R}$ for later use.

\begin{lemma}\label{convex}
Regardless of $R>\frac{m}{2}$, $\Sigma^{\text{int}}_{\theta,R}$ lies within the region enclosed by the three planes $Q_0$, $Q_\theta$, $P_0$, and the horizon $\partial M$.
\end{lemma}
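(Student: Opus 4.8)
The plan is to use the maximum principle for minimal surfaces together with the fact that each of the four barrier surfaces $Q_0$, $Q_\theta$, $P_0$, and $\partial M$ is totally geodesic, hence minimal, in $\widehat{M}$. First I would check that the contour $\Gamma_{\theta,R}$ itself lies in the closed region $\Omega$ bounded by these four surfaces: the two geodesic polygonal arcs $\gamma_{0,R}$ and $\gamma_{\theta,R}$ lie in $Q_0$ and $Q_\theta$ respectively, their ``vertical'' segments run along the horizon from $(0,0,\tfrac m2)$ to points of $\partial M$ and then their ``horizontal'' segments run in $P_0$ out to radius $R$, while the circular arc $C_{\theta,R}$ connects the two outer endpoints inside $P_0$ at Euclidean distance $R$ from the origin. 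So $\Gamma_{\theta,R}\subset\overline\Omega$, and in fact $\Gamma_{\theta,R}$ meets $\partial\Omega$ only in the faces $Q_0,Q_\theta,P_0$ (not in the curved interior).

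Next I would argue by contradiction. Suppose $\Sigma^{\text{int}}_{\theta,R}$ is not contained in $\overline\Omega$. Then some interior point of $\Sigma_{\theta,R}$ lies outside $\overline\Omega$, so the minimal disk crosses at least one of the four totally geodesic barriers. The key step is to slide a foliation-type barrier: for $Q_0$ and $Q_\theta$ one uses the family of totally geodesic planes $Q_\alpha$ (or their reflections), and for $P_0$ the family $P_{\alpha,\phi}$ as $\phi$ varies; for the horizon one uses the family of inverted spheres $I(\{|x|=r\})$, $r\ge \tfrac m2$, which are the images under the isometry $I$ of the umbilic Euclidean spheres and foliate a neighborhood of $\partial M$ inside $M$ — alternatively one observes that $\partial M$ together with its mirror image is a single smooth totally geodesic sphere in $\widehat M$ so the standard one-sided maximum principle applies directly. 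In each case, starting from a barrier disjoint from the compact surface $\Sigma_{\theta,R}$ and moving it continuously toward $\Omega$, there is a first time of contact; at that point the two minimal surfaces are tangent with $\Sigma_{\theta,R}$ lying (locally) on one side, so the strong maximum principle forces them to coincide on a component — but $\Sigma_{\theta,R}$ is connected (a disk) and not equal to any $Q_\alpha$, $P_{\alpha,\phi}$, or sphere because its boundary $\Gamma_{\theta,R}$ is not contained in any one of them. This contradiction shows $\Sigma_{\theta,R}\subset\overline\Omega$, and then the boundary maximum principle (Hopf lemma), applied at any hypothetical interior contact point of $\Sigma^{\text{int}}_{\theta,R}$ with one of the faces, upgrades this to $\Sigma^{\text{int}}_{\theta,R}\subset\Omega$, i.e. the interior lies strictly inside the enclosed region.

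The main obstacle I expect is the treatment of the horizon barrier and the corners where the four barrier surfaces meet. For the horizon one must be slightly careful because $\partial M$ is the boundary of the ambient manifold $M$; this is exactly why passing to $\widehat M=M\cup I(M)$ is useful, since there $\partial M$ becomes an interior totally geodesic sphere and the ordinary interior maximum principle is available, while the isometry $I$ also gives the sweep-out family needed to start the barrier far from $\Sigma_{\theta,R}$. At the edges $Q_0\cap Q_\theta$, $Q_0\cap P_0$, etc., and at the vertices, the region $\Omega$ is only piecewise smooth, so one should either apply the maximum principle on each smooth face separately (noting $\Gamma_{\theta,R}$ touches the faces but not the edges in its interior) or use a convex-hull/barrier argument along each edge using the two totally geodesic faces meeting there. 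A clean way to package all of this is to note that each pair of adjacent barriers bounds a wedge, intersect the four half-space-type regions, and invoke the maximum principle once per face; I would carry this out face by face, which keeps every step to a routine application of the strong maximum principle and the Hopf boundary lemma for the minimal surface equation in $\widehat M$.
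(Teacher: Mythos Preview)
Your approach is genuinely different from the paper's. The paper does not slide barriers and invoke the strong maximum principle; instead it uses the \emph{area-minimizing} property of $\Sigma_{\theta,R}$: if part of the disk lay on the wrong side of one of the totally geodesic barriers, reflecting that part across the barrier would produce a competitor disk with the same boundary and the same area but with a crease along the barrier, and smoothing that crease strictly decreases area --- contradicting minimality among disks. The touching case is then dispatched by the maximum principle, exactly as you do. So the paper's argument uses the stronger hypothesis (area-minimizing, not just minimal) to get a two-line proof.

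Your barrier-sliding argument, as written, has a gap that is not merely a corner technicality. The family $\{Q_\alpha\}$ you propose for the faces $Q_0$ and $Q_\theta$ consists of planes through the origin perpendicular to vectors in the $xy$-plane; every $Q_\alpha$ therefore contains the entire $z$-axis, and in particular the vertex $(0,0,\tfrac m2)\in\partial\Sigma_{\theta,R}$. Hence you can never start from a $Q_\alpha$ disjoint from $\Sigma_{\theta,R}$, and the ``first time of contact'' cannot be guaranteed to occur at an interior point: a sequence of interior points with angular coordinate tending to the extremal value could accumulate at the vertex, where the maximum principle does not directly apply. Your anticipated fix (handle corners face by face) does not resolve this, because the obstruction is built into the foliation itself, not into the intersection of two faces. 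In the Schwarzschild metric only planes through the origin are totally geodesic, so there is no parallel family to substitute. The cleanest repair is precisely the paper's reflection trick, which trades the sliding barrier for the area-minimizing property and sidesteps the shared-axis issue entirely.
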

\begin{proof}
Assume that a part of the surface $\Sigma_{\theta,R}$ lies outside of the region enclosed by $Q_0$, $Q_\theta$, $P_0$, and $\partial M$. We can obtain a contradiction by reflecting the outer part of the surface with respect to one of the totally geodesic surfaces. Smoothing the resulting surface leads to a contradiction with the fact that $\Sigma^{\text{int}}_{\theta,R}$ is an area-minimizing disk. Similarly, if $\Sigma^{\text{int}}_{\theta,R}$ touches the boundary of the region, we again get a contradiction using the maximum principle.
\end{proof}
\begin{rmk}\normalfont
This method is analogous to establishing the convex hull property for minimal surfaces in Euclidean spaces.
\end{rmk}

\begin{lemma}\label{monotone}
The family of area-minimizing disks $\{\Sigma_{\theta,R}\}$ has the following properties:
\begin{enumerate}
\item The family $\{\Sigma_{\theta,R}\}$ grows monotonically with respect to $R$ in the sense that $\Sigma_{\theta,{R_2}}^{\textup int}$ lies above $\Sigma_{\theta,{R_1}}^{\textup int}$ if $R_2> R_1>\frac{m}{2}$.
\item Each $\Sigma_{\theta,R}$ meets the totally geodesic plane $P_{\theta,\phi}$ along a simple curve that connects two points $A_{\theta,\phi}$ and $B_{\theta,\phi}$. Here, $A_{\theta,\phi}$ and $B_{\theta,\phi}$ are the points of intersection with $P_{\theta,\phi}$ and $\gamma_{0,R}, \gamma_{\theta,R}$, respectively, provided that $\phi\in\left(0,\frac{\pi}{2}\right)$.
\end{enumerate}
\end{lemma}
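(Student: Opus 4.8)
\textit{Strategy.} Both statements follow from the maximum principle together with the reflection isometries of $\widehat M$ across the totally geodesic planes $Q_\alpha$ and $P_{\alpha,\phi}$, in the same spirit as the proof of Lemma \ref{convex}. Throughout I use three facts: the planes $Q_\alpha$, $P_{\alpha,\phi}$ and the horizon $\partial M$ are minimal, and reflection across any $Q_\alpha$ or $P_{\alpha,\phi}$ is an isometry of $\widehat M$ preserving $M$; each $\Sigma_{\theta,R}$ is a smooth embedded area-minimizing disk lying, together with $\Gamma_{\theta,R}=\partial\Sigma_{\theta,R}$, in the region $\Omega$ bounded by $Q_0,Q_\theta,P_0,\partial M$, with $\Sigma_{\theta,R}^{\textup{int}}$ disjoint from $\partial\Omega$ (Lemma \ref{convex}); and minimal surfaces obey unique continuation. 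Part (1) does not use part (2), so I prove it first.

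\textit{Proof of (1).} Fix $R_2>R_1>\tfrac m2$. Together with the spherical triangle on $\partial M$ enclosed by its two horizon arcs, the pieces of $Q_0,Q_\theta$ it cuts off, and the radius-$R_2$ sector of $P_0$, the disk $\Sigma_{\theta,R_2}$ bounds a compact region $\Omega_{R_2}\subset\Omega$; the assertion is that $\Sigma_{\theta,R_1}\subset\overline{\Omega_{R_2}}$ with $\Sigma_{\theta,R_1}^{\textup{int}}$ interior. First, $\Gamma_{\theta,R_1}\subset\partial\Omega_{R_2}$: the horizon arcs and the radial segments out to radius $R_1$ are shared with $\Gamma_{\theta,R_2}$, and $C_{\theta,R_1}$ lies in the radius-$R_2$ sector of $P_0$, which is a face of $\partial\Omega_{R_2}$ because, by the maximum principle with $P_0$, $\Sigma_{\theta,R_2}^{\textup{int}}$ lies strictly on the $z>0$ side over that sector. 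Now if $\Sigma_{\theta,R_1}$ left $\overline{\Omega_{R_2}}$, then since its boundary lies in $\overline{\Omega_{R_2}}$ and it is connected, it would be internally tangent to $\partial\Omega_{R_2}$ at an interior point; as every face of $\partial\Omega_{R_2}$ is minimal, the strong maximum principle would force $\Sigma_{\theta,R_1}$ to coincide with that face near the point and hence everywhere, contradicting $\partial\Sigma_{\theta,R_1}\neq\partial\Sigma_{\theta,R_2}$ (or contradicting Lemma \ref{convex} if the face is $\partial M$). Thus $\Sigma_{\theta,R_1}\subset\overline{\Omega_{R_2}}$, and applying the strong maximum principle once more on the face $\Sigma_{\theta,R_2}$ rules out interior contact, giving $\Sigma_{\theta,R_1}^{\textup{int}}\subset\Omega_{R_2}^{\textup{int}}$.

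\textit{Proof of (2).} Fix $\phi\in(0,\tfrac\pi2)$. A direct computation from the explicit description of $\Gamma_{\theta,R}$ shows that $P_{\theta,\phi}$ misses the radial segments of $\gamma_{0,R},\gamma_{\theta,R}$ and the arc $C_{\theta,R}$, and crosses each of the two horizon arcs transversally in exactly one point, namely $A_{\theta,\phi}\in\gamma_{0,R}$ and $B_{\theta,\phi}\in\gamma_{\theta,R}$; hence $\partial\Sigma_{\theta,R}\cap P_{\theta,\phi}=\{A_{\theta,\phi},B_{\theta,\phi}\}$. Interior tangency of the minimal surfaces $\Sigma_{\theta,R}^{\textup{int}}$ and $P_{\theta,\phi}$ is impossible (the strong maximum principle plus unique continuation would force $\Sigma_{\theta,R}\subset P_{\theta,\phi}$, absurd since $\partial\Sigma_{\theta,R}\not\subset P_{\theta,\phi}$), so $\Sigma_{\theta,R}\cap P_{\theta,\phi}$ is a compact $1$-manifold with boundary $\{A_{\theta,\phi},B_{\theta,\phi}\}$: one simple arc joining $A_{\theta,\phi}$ to $B_{\theta,\phi}$, plus a priori finitely many disjoint simple closed curves. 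To exclude a closed component $c$, note it bounds a subdisk $D\subset\Sigma_{\theta,R}$ with $D^{\textup{int}}$ on one side of $P_{\theta,\phi}$; replacing $D$ by its mirror image under the reflection $\sigma$ across $P_{\theta,\phi}$ yields a competitor of the same boundary and area, hence an area-minimizing, thus smooth, surface, so $T_p\Sigma_{\theta,R}$ must be $\sigma$-invariant for every $p\in c$. Therefore $\Sigma_{\theta,R}$ is either tangent to $P_{\theta,\phi}$ somewhere on $c$ (excluded above) or orthogonal to it along all of $c$, and in the latter case the reflection principle and unique continuation force $\sigma(\Sigma_{\theta,R})=\Sigma_{\theta,R}$, contradicting the asymmetry of $\Gamma_{\theta,R}$ across $P_{\theta,\phi}$. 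Hence $\Sigma_{\theta,R}\cap P_{\theta,\phi}$ is a single simple arc from $A_{\theta,\phi}$ to $B_{\theta,\phi}$.

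\textit{Main obstacle.} In both parts the interior arguments are standard; the care is needed at the boundary. In (1) one must handle the possibility that $\Sigma_{\theta,R_1}$ first meets $\partial\Omega_{R_2}$ along an edge or corner of $\Omega_{R_2}$ (the curve $\Gamma_{\theta,R_2}$ or the ribs of $\Omega$), where the interior strong maximum principle does not apply; in (2) one must rule out tangency of the arc with $\partial\Sigma_{\theta,R}$ at $A_{\theta,\phi},B_{\theta,\phi}$ and make the $1$-manifold description valid up to the boundary. I expect to dispatch these either by the boundary and edge versions of the maximum and reflection principles standard in this circle of ideas, or by exhausting $\Sigma_{\theta,R}$ by compact pieces avoiding a neighborhood of $\partial\Sigma_{\theta,R}$ and passing to the limit.
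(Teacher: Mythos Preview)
Your proof of (2) is essentially sound, though phrased differently from the paper. One imprecision: the strong maximum principle does \emph{not} by itself exclude interior tangency of two minimal surfaces; it only gives a contradiction when one surface lies locally on one side of the other. Two minimal surfaces can be tangent at a point and still cross (the local picture is $2k$ smooth arcs through that point). Thus $\Sigma_{\theta,R}\cap P_{\theta,\phi}$ need not be a smooth $1$-manifold, only a network of piecewise smooth arcs with no interior endpoints. This is harmless for your purposes: your reflection argument applies verbatim to any innermost piecewise smooth closed loop. The paper handles (2) by a direct cut-and-paste with $P_{\theta,\phi}$ rather than with the reflected disk $\sigma(D)$, but your route is a legitimate alternative and in fact sidesteps having to argue that pieces of $P_{\theta,\phi}$ are area-minimizing.

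Your argument for (1), however, has a genuine gap. You assert that if $\Sigma_{\theta,R_1}$ left $\overline{\Omega_{R_2}}$ then, being connected with boundary in $\overline{\Omega_{R_2}}$, it would be internally tangent to one of the minimal faces of $\partial\Omega_{R_2}$. This does not follow. A minimal surface whose boundary sits in a region bounded by minimal faces can perfectly well exit that region by crossing a face transversally---two intersecting planes already illustrate this. The strong maximum principle yields a contradiction only at a one-sided touching point, and nothing in your setup manufactures one: there is no foliation of the exterior of $\Omega_{R_2}$ by minimal barriers that you could slide until first contact. What actually forbids the crossing is the \emph{area-minimizing} property of both disks, not mere minimality, and this is precisely what the paper uses. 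If $\Sigma_{\theta,R_1}^{\textup{int}}$ met $\Sigma_{\theta,R_2}$, then---using that $C_{\theta,R_1}^{\textup{int}}$ misses $\Sigma_{\theta,R_2}$ by Lemma~\ref{convex}---the intersection set together with the shared boundary arcs $\gamma_{0,R_1}\cup\gamma_{\theta,R_1}$ would contain a piecewise smooth closed curve bounding simply-connected regions in each disk; swapping these regions (and smoothing if the areas agree) contradicts area-minimality. Your barrier/maximum-principle framing cannot substitute for this cut-and-paste step, so as written part (1) is incomplete.
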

\begin{proof}
Both properties can be established by employing the cut-and-paste principle (a general version of this principle can be found in \cite{HMW}). To show the first property, we consider two positive real numbers $R_1$ and $R_2$ such that $\frac{m}{2}<R_1<R_2$. We want to show that the two surfaces $\Sigma_{\theta,{R_1}}$ and $\Sigma_{\theta,{R_2}}$ only intersect along the boundary curves $\gamma_{0,R_1}$ and $\gamma_{\theta,R_1}$ of $\Sigma_{\theta,{R_1}}$. Suppose the surfaces meet at an interior point $q$ of $\Sigma_{\theta,{R_1}}$. Regardless of whether they intersect tangentially or trasversely at $q$, we can find a piecewise smooth curve of intersection passing through $q$. However, note that these intersection curves cannot be connected to $C^{\text{int}}_{\theta, R_1}$ since $C^{\text{int}}_{\theta, R_1}\cap\Sigma_{\theta, R_2}=\emptyset$ due to Lemma \ref{convex}. This curve, possibly combined with other intersection curves or $\gamma_{0,R_1}\cup\gamma_{\theta,R_1}$, forms a piecewise smooth closed curve that bounds simply-connected regions in both surfaces. This immediately leads to a contradiction based on the cut-and-paste principle since both surfaces are area-minimizing disks.

For the second part of the lemma, the proof proceeds in the same manner. We observe that the totally geodesic planes $P_{\theta,\phi}$ intersect $\gamma_{0,R}$ and $\gamma_{\theta,R}$ at $A_{\theta,\phi}$ and $B_{\theta,\phi}$, respectively. For every point $p\in\Sigma^{\text{int}}_{\theta, R}\cap P_{\theta,\phi}$, it cannot be an endpoint of the intersection curves. If $\Sigma^{\text{int}}_{\theta, R}\cap P_{\theta,\phi}$ contains a piecewise smooth simple closed curve, we arrive at a contradiction as in the previous argument. Otherwise, every intersection curve must be connected to $A_{\theta,\phi}$ or $B_{\theta,\phi}$, which are the only points in $\partial\Sigma_{\theta,R}\cap P_{\theta,\phi}$. If $\Sigma_{\theta,R}\cap P_{\theta,\phi}$ were not a simple curve joining $A_{\theta,\phi}$ and $B_{\theta,\phi}$, there would again exist a simple closed curve in the intersection, leading to a contradiction.
\end{proof}
According to the above Lemma \ref{monotone}, we obtain an appropriate area bound for the surface $\Sigma_{\theta,R}$. To get the area bound, we introduce the domain $\triangle_\theta(\phi,\epsilon,\delta)$ as follows: Consider a domain in $M$ bounded by the totally geodesic planes $Q_0$, $Q_\theta$, $P_{\theta,\phi}$, and two concentric spheres whose Euclidean distances from the horizon are given by $\epsilon$ and $\delta$, respectively (see Figure 2). Here, $\phi\in\left(0,\frac{\pi}{2}\right)$ and $\frac{m}{2}<\epsilon<\delta$.
\begin{prop}\label{areaest}
For every $\Sigma_{\theta,R}$ ($R>\frac{m}{2}$) that intersects $\partial\triangle_\theta(\phi,\epsilon,\delta)$ transversally, there exists $K >0$ depending only on $\triangle_{\theta}(\phi,\epsilon,\delta)$ such that
\begin{align*}
\textup{Area}_g (\Sigma_{\theta,R} \cap \overline{\triangle_{\theta}(\phi,\epsilon,\delta)}) \le K.
\end{align*}
\end{prop}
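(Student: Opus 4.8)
The plan is to bound $\textup{Area}_g\bigl(\Sigma_{\theta,R}\cap\overline{\triangle_\theta(\phi,\epsilon,\delta)}\bigr)$ by the area of the \emph{fixed} boundary surface $\partial\triangle_\theta(\phi,\epsilon,\delta)$, exploiting the area-minimizing property of $\Sigma_{\theta,R}$ through a cut-and-paste comparison. Accordingly I will take
\[
K:=\textup{Area}_g\bigl(\partial\triangle_\theta(\phi,\epsilon,\delta)\bigr),
\]
which is finite because $\partial\triangle_\theta(\phi,\epsilon,\delta)$ is a compact piecewise-smooth surface assembled from portions of the three totally geodesic planes $Q_0,Q_\theta,P_{\theta,\phi}$ and the two concentric spheres, and which by construction depends only on $\theta,\phi,\epsilon,\delta$ and not on $R$ (one could even take $K=\tfrac12\textup{Area}_g(\partial\triangle_\theta(\phi,\epsilon,\delta))$). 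First I would record that $\Gamma_{\theta,R}$ is disjoint from $\overline{\triangle_\theta(\phi,\epsilon,\delta)}$: the horizon arcs of $\gamma_{0,R}$ and $\gamma_{\theta,R}$ lie on $\{|x|=\tfrac{m}{2}\}$, strictly inside the inner sphere $\{|x|=\tfrac{m}{2}+\epsilon\}$, while the radial segments of $\gamma_{0,R},\gamma_{\theta,R}$ together with $C_{\theta,R}$ all lie in $P_0$, hence --- being contained in the region between $Q_0$ and $Q_\theta$ --- on the side of $P_{\theta,\phi}$ opposite to $\triangle_\theta(\phi,\epsilon,\delta)$. Therefore, when $\Sigma_{\theta,R}$ meets $\partial\triangle_\theta(\phi,\epsilon,\delta)$ transversally, $\mathcal C:=\Sigma_{\theta,R}\cap\partial\triangle_\theta(\phi,\epsilon,\delta)$ is a finite disjoint union of smooth simple closed curves lying in the interior of $\Sigma_{\theta,R}$ and on the topological $2$-sphere $\partial\triangle_\theta(\phi,\epsilon,\delta)$.

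The competitor is then constructed as follows. Since the curves of $\mathcal C$ are disjoint and embedded in the sphere $\partial\triangle_\theta(\phi,\epsilon,\delta)$, they cut it into finitely many regions with tree-like dual graph; one class of the resulting $2$-coloring is a subsurface $\mathsf T\subseteq\partial\triangle_\theta(\phi,\epsilon,\delta)$ with $\partial\mathsf T=\mathcal C$ and $\textup{Area}_g(\mathsf T)\le K$. Replacing the piece $\Sigma_{\theta,R}\cap\triangle_\theta(\phi,\epsilon,\delta)$ --- whose boundary is $\mathcal C$ --- by $\mathsf T$ yields a surface $\widetilde\Sigma$ with $\partial\widetilde\Sigma=\Gamma_{\theta,R}$, agreeing with $\Sigma_{\theta,R}$ outside $\triangle_\theta(\phi,\epsilon,\delta)$ and with
\[
\textup{Area}_g(\widetilde\Sigma)=\textup{Area}_g(\Sigma_{\theta,R})-\textup{Area}_g\bigl(\Sigma_{\theta,R}\cap\triangle_\theta(\phi,\epsilon,\delta)\bigr)+\textup{Area}_g(\mathsf T).
\]
Minimality of $\Sigma_{\theta,R}$ gives $\textup{Area}_g(\widetilde\Sigma)\ge\textup{Area}_g(\Sigma_{\theta,R})$, hence $\textup{Area}_g\bigl(\Sigma_{\theta,R}\cap\triangle_\theta(\phi,\epsilon,\delta)\bigr)\le\textup{Area}_g(\mathsf T)\le K$, which is the assertion.

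The one genuinely delicate point --- the step I expect to be the main obstacle --- is to make sure $\widetilde\Sigma$ is an \emph{admissible} competitor for the variational problem solved by $\Sigma_{\theta,R}$, i.e.\ that the surgery does not alter the relevant topological type; this is a real issue exactly when $\Sigma_{\theta,R}\cap\triangle_\theta(\phi,\epsilon,\delta)$ is disconnected, since then it and $\mathsf T$, though cobounding, need not have the same number of components. I see two ways to handle it. The cleaner is to argue with integral currents: $\Sigma_{\theta,R}$ is in fact area-minimizing among \emph{all} integral $2$-currents with boundary $\Gamma_{\theta,R}$, because $\Gamma_{\theta,R}$ is a simple closed curve bounding a disk on the boundary of the mean-convex region $\mathcal W$ enclosed by $Q_0,Q_\theta,P_0$ and $\partial M$ (so the area-minimizing current spanning $\Gamma_{\theta,R}$ is an embedded disk contained in $\mathcal W$, of the same area as $\Sigma_{\theta,R}$); then, since $H_1(\partial\triangle_\theta(\phi,\epsilon,\delta))=0$ and $H_2(\triangle_\theta(\phi,\epsilon,\delta))=0$, the current $\Sigma_{\theta,R}-\Sigma_{\theta,R}\!\restriction\!\triangle_\theta(\phi,\epsilon,\delta)+\mathsf T$ is homologous to $\Sigma_{\theta,R}$ rel $\Gamma_{\theta,R}$ and the comparison above is legitimate. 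Alternatively, one stays in the category of disks and verifies, using Lemmas \ref{convex} and \ref{monotone} --- which force $\Sigma_{\theta,R}$ to touch $Q_0,Q_\theta$ only along $\Gamma_{\theta,R}$ and to meet $P_{\theta,\phi}$ in a single simple arc, thereby constraining how $\Sigma_{\theta,R}$ can enter $\triangle_\theta(\phi,\epsilon,\delta)$ --- that $\mathsf T$ can be selected so that $\widetilde\Sigma$ is again an embedded disk with boundary $\Gamma_{\theta,R}$. In either case the area comparison then yields the bound with $K=\textup{Area}_g(\partial\triangle_\theta(\phi,\epsilon,\delta))$.
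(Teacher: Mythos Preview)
Your proposal is correct and follows essentially the same route as the paper: a cut-and-paste comparison against $\partial\triangle_\theta(\phi,\epsilon,\delta)$, yielding $K=\textup{Area}_g(\partial\triangle_\theta(\phi,\epsilon,\delta))$. The paper resolves the admissibility issue via your option (b): it pulls back to the parametrizing disk $D$, takes the \emph{outermost} intersection curves $C_i\subset D$, and uses Lemma~\ref{monotone}(2) to show that the caps $\Omega_{C_i}\subset\partial\triangle_\theta$ bounded by $\psi(C_i)$ can be chosen pairwise disjoint --- so each swap $\psi(D_{C_i})\rightsquigarrow\Omega_{C_i}$ is a disk-for-disk replacement and the competitor stays a disk; your currents alternative (a) is not used.
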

\begin{proof}
Assume that $\Sigma_{\theta,R}$ intersects transversally with $\partial\triangle_\theta(\phi,\epsilon,\delta)$. We only need to consider the case where the domain satisfies $\triangle_{\theta}(\phi,\epsilon,\delta)^{\textup{int}}\cap \Sigma_{\theta,R} \neq \emptyset$. Note that the intersection curves $\partial \triangle_{\theta}(\phi,\epsilon,\delta) \cap \Sigma_{\theta,R}$ lie on $\Sigma_{\theta,R}^{\textup{int}}$ since $\partial \Sigma_{\theta,R} \cap \overline{\triangle_{\theta}(\phi,\epsilon,\delta)} = \emptyset$. Since $\Sigma_{\theta,R}$ is a disk-type Plateau solution, there exists a minimal immersion $\psi : D \rightarrow \Sigma^{\text{int}}_{\theta,R}$ where $D$ is an open unit disk. By the embeddedness of the surface and transversality, we see that $\psi^{-1}\left(\partial \triangle_{\theta}(\phi,\epsilon,\delta) \cap \Sigma_{\theta,R}\right)$ is a union of at most countably many disjoint closed curves in $D$. Thus, we can write
\begin{align*}
\psi^{-1}\left(\Sigma_{\theta, R}\cap\triangle_\theta(\phi,\epsilon,\delta)\right)=\cup_{i=1}^{\infty}D_{C_i},
\end{align*}
where $C_i$'s are the outermost curves in $D$, and $D_{C_i}$ denotes the closed region bounded by $C_i$. Here, ``outermost'' means that there is no other closed curve in $\psi^{-1}\left(\partial \triangle_{\theta}(\phi,\epsilon,\delta) \cap \Sigma_{\theta,R}\right)$ that contains $C$ in its interior.

We claim that for each $C_i$, one can associate a region $\Omega_{C_i}\subset\partial\triangle_\theta(\phi,\epsilon,\delta)$ bounded by $\psi(C_i)$ such that $\Omega_{C_i}\cap\Omega_{C_j}=\emptyset$ for all $i\neq j$. Since $C_1$ and $C_2$ are disjoint, we can define $\Omega_{C_1}$ and $\Omega_{C_2}$. Now assume that $\left\{\Omega_{C_1},\cdots,\Omega_{C_n}\right\}$ ($n\geq2$) is given. If $\Omega_{C_{n+1}}$ cannot be defined, then there should be at least one $\Omega_{C_*}$ in each component of $\partial\triangle_\theta(\phi,\epsilon,\delta)\setminus\psi(C_{n+1})$. Now there exist $C_i$ ($1\leq i\leq n$) and an angle $\phi_0 \in (\phi, \frac{\pi} {2})$ such that $P_{\theta,\phi_0} \cap \Sigma_{\theta,R}$ contains two curves in $\psi(D_{C_i})$ and $\psi(D_{C_{n+1}})$. By Lemma \ref{monotone} (2), we know that $\Sigma_{\theta,R}$ and $P_{\theta,\phi_0}$ meet along a simple curve joining two points $A_{\theta,\phi_0}$ and $B_{\theta,\phi_0}$. It means that this simple curve connects two regions $\psi(D_{C_i})$ and $\psi(D_{C_{n+1}})$ in $\Sigma_{\theta, R}$. This is impossible since $C_i$ and $C_{n+1}$ are two disjoint curves in the preimage $\psi^{-1}\left(\Sigma_{\theta, R}\cap\partial\triangle_\theta(\phi,\epsilon,\delta)\right)$.
Therefore the claim is proved.

Recall that the surface $\Sigma_{\theta,R}$ is an area-minimizing disk. Consequently, we can replace the area of $\psi(D_{C_i})$ with a larger area $\Omega_{C_i}$ to get the desired area bound
\begin{align*}
\textup{Area}_g (\Sigma_{\theta,R} \cap \overline{\triangle_{\theta}(\phi,\epsilon,\delta)}) \le K
\end{align*}
where $K$ is just the area of $\partial\triangle_\theta(\phi,\epsilon,\delta)$. This completes the proof.
\end{proof}
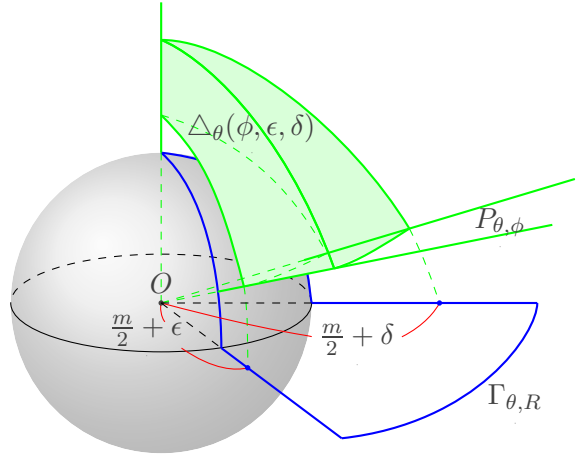
\begin{figure}
\centering
\begin{tikzpicture}
\shade[ball color = gray!40, opacity = 0.4] (0,0) circle (2cm);
\draw (-2,0) arc (180:360:2 and 0.65) node[anchor=west] {$$};
\draw[dashed] (2,0) arc (0:180:2 and 0.65);

\draw[blue, thick] (0,2).. controls (0.57, 1.5) and (0.798, 0.4) .. (0.8,-0.6);

\draw[blue, thick] (0,2).. controls (1.95, 1.77) and (1.93, 0.4) .. (2,0);

\fill [green!15, draw opacity=0.2] (0,2.5) -- (0,3.5) -- (0,3.5).. controls (1,3.5) and (2.5,2.3) .. (3.3,0.99) -- (3.3,0.99) .. controls (2.7,0.6) and (2.3,0.46) .. (2,0.4) -- (2,0.4) -- (1.1,0.22) -- (1.1,0.22).. controls (0.75,1.6)and(0.5,2) .. (0,2.5);

\draw[green,dashed, -] (0,0)--(0.75,0.15);
\draw[green,thick, -] (0.75,0.15)--(5.2,1.04);
\draw[green,dashed, -] (0,0)--(1.9,0.57);
\draw[green,thick, -] (1.9,0.57)--(5.5,1.65);

\draw[green,dashed, -] (0,0)--(0,2);
\draw[green,thick, -] (0,2)--(0,4);
\draw[green,thick, -] (0,2.5).. controls (0.5,2) and (0.75,1.6) .. (1.1,0.22);
\draw[green,dashed, -] (1.1,0.22).. controls (1.16,-0.1) and (1.15,-0.7) .. (1.15,-0.8625);

\draw[green,thick, -] (0,3.5).. controls (0.5,3.3) and (1.6,2.4) .. (2.3,0.46);

\draw[green,thick, -] (0,3.5).. controls (1,3.5) and (2.5,2.3) .. (3.3,0.99);
\draw[green,thick, -] (2.3,0.46).. controls (2.5,0.52) and (2.7,0.6) .. (3.3,0.99);
\draw[green,dashed, -] (0,2.5).. controls (1,2.2) and (1.7,1.6) .. (2.2,0.66);
\draw[green,dashed, -] (1.1,0.22).. controls (1.3,0.24) and (2,0.55) .. (2.2,0.66);
\draw[green,dashed, -] (3.3,0.99).. controls (3.5,0.6) and (3.6,0.3) .. (3.7,0);

\draw[red,thin,-] (0,0).. controls (-0.02,-0.05) and (0, -0.2).. (0.05,-0.25) ;
\draw[red,thin,-] (0.27,-0.6).. controls (0.7,-0.9) and (1,-0.9) .. (1.15,-0.8625);

\draw[red, thin,-] (0,0).. controls (1.5,-0.4) and (2,-0.43) .. (2,-0.45);
\draw[red, thin,-] (3.2,-0.45).. controls (3.5,-0.3) and (3.65,-0.1) .. (3.7,0);
\draw[green,thick, -] (0,3.5).. controls (0.5,3.3) and (1.6,2.4) .. (2.3,0.46);

\filldraw[darkgray] (-0.2,- 0.7) circle (0pt) node[anchor=south] {$\frac{m}{2}+\epsilon$};
\filldraw[darkgray] (2.6,- 0.82) circle (0pt) node[anchor=south] {$\frac{m}{2}+\delta$};

\filldraw[blue] (1.15,-0.8625) circle (0.8pt) node[anchor=south] {$$};
\filldraw[blue] (3.7,0) circle (0.8pt) node[anchor=south] {$$};
\filldraw[darkgray] (0,0) circle (0.8pt) node[anchor=south] {$O$};

\draw[dashed] (0,0)--(0.8,-0.6);
\draw[blue ,thick, -] (0.8,-0.6)--(2.4, -1.8);
\draw[dashed] (0,0)--(2,0);
\draw[blue,thick, -] (2,0)--(5,0);
\draw[blue,thick] (2.4, -1.8).. controls (4.2, -1.6) and (5, -0.4) .. (5,0);

\filldraw[darkgray] (1.2,2) circle (0pt) node[anchor=south] {$\triangle_{\theta}(\phi,\epsilon,\delta)$};
\filldraw[darkgray] (4.5,0.75) circle (0pt) node[anchor=south] {$P_{\theta,\phi}$};
\filldraw[darkgray] (4.7,-1.6) circle (0pt) node[anchor=south] {$\Gamma_{\theta,R}$};
\end{tikzpicture}

\caption{The domain $\triangle_{\theta}(\phi,\epsilon,\delta)$.}
\end{figure}
\begin{rmk}\normalfont
It is unsure whether the surface $\Sigma_{\theta,R}^{\text{\normalfont int}}$ would be a graph in the spherical coordinate system. If it is indeed a graph $\phi = \phi(r,\alpha) $ over an open domain, where $(r,\alpha, \phi)$ denotes the spherical coordinate system, the limit process in the following section can be simplified.
\end{rmk}

\section{A curvature estimate}\label{sec4}
\setcounter{equation}{0}
We establish the curvature estimate for the area-minimizing disk $\Sigma_{\theta,R}$ discussed in Section \ref{sec3}. From now on, $A^M_{\Sigma}$ and $A_{\Sigma}$ denote the second fundamental form of $\Sigma$ with respect to the metric $g$ and the Euclidean metric, respectively. Additionally, $|\cdot|_g$ refers to the norm relative to $g$.
\begin{prop}\label{curvest}
There exists $C>0$ such that
\begin{align*}
\sup_{R>\frac{m}{2}}\left(\sup_{q\in\Sigma_{\theta,R}}\left(\left|A^M_{\Sigma_{\theta,R}}(q)\right|_{g}\cdot\min\left(1,d_{\mathbb{R}^3}(q,\partial\Sigma_{\theta,R})\right)\right)\right)< C.
\end{align*}
\end{prop}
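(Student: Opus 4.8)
The plan is to argue by contradiction using a blow-up (point-picking) argument, which is the standard route to interior curvature estimates for area-minimizing surfaces. Suppose the quantity in the statement is not bounded. Then there exist $R_k > \frac{m}{2}$ and points $q_k \in \Sigma_{\theta,R_k}$ with $\lambda_k := |A^M_{\Sigma_{\theta,R_k}}(q_k)|_g \cdot \min(1, d_{\mathbb{R}^3}(q_k, \partial\Sigma_{\theta,R_k})) \to \infty$. By the usual Schoen-type point-selection trick applied on each $\Sigma_{\theta,R_k}$ (restricting to the region where $d_{\mathbb{R}^3}(\cdot, \partial\Sigma_{\theta,R_k}) \geq \rho_k$ for a suitable scale and picking a near-maximum of $|A^M|_g$ weighted by distance-to-boundary), one may assume in addition that $|A^M_{\Sigma_{\theta,R_k}}(q_k)|_g \to \infty$ and that $|A^M_{\Sigma_{\theta,R_k}}|_g$ is comparably bounded on an intrinsic (or Euclidean) ball around $q_k$ whose radius, measured in units of $1/|A^M(q_k)|_g$, tends to infinity. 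Since $\widehat M$ is a fixed smooth manifold, on such shrinking balls the metric $\widehat g$ converges smoothly to a flat metric, so after rescaling the Euclidean metric by $|A_{\Sigma_{\theta,R_k}}(q_k)| \to \infty$ the surfaces $\Sigma_{\theta,R_k}$ converge (subsequentially, in $C^2_{loc}$, using the curvature bound plus standard elliptic estimates for minimal graphs) to a complete minimal surface $\Sigma_\infty$ in $\mathbb{R}^3$, passing through the origin with $|A_{\Sigma_\infty}(0)| = 1$, hence nonflat.

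Next I would show $\Sigma_\infty$ is stable and has at most quadratic area growth, and thereby derive a contradiction. Stability: each $\Sigma_{\theta,R_k}$ is area-minimizing, so it is stable for the second variation with respect to $\widehat g$; under the rescaling the stability inequality passes to the limit (the ambient geometry contributes curvature terms that vanish in the blow-up), so $\Sigma_\infty$ is a stable minimal surface in $\mathbb{R}^3$. Area growth: this is exactly where Proposition~\ref{areaest} enters — the uniform area bound on $\Sigma_{\theta,R}\cap\overline{\triangle_\theta(\phi,\epsilon,\delta)}$, combined with the geometry of how the $q_k$ sit inside the region (by Lemma~\ref{convex} they stay in the fixed wedge between $Q_0$, $Q_\theta$, $P_0$ and the horizon, and by Lemma~\ref{monotone} the surfaces are nested), gives a uniform bound on the $\widehat g$-area of $\Sigma_{\theta,R_k}$ in any fixed coordinate ball around $q_k$; after rescaling this becomes a quadratic-area-growth bound for $\Sigma_\infty$. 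A complete, stable, orientable minimal surface in $\mathbb{R}^3$ is a plane (by the theorem of do Carmo–Peng / Fischer-Colbrie–Schoen / Pogorelov), contradicting $|A_{\Sigma_\infty}(0)| = 1$. (Alternatively, finite total curvature of a complete stable surface already forces it to be a plane; quadratic area growth plus stability likewise suffices.)

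Two technical points need care. First, the blow-up point $q_k$ may drift toward the boundary $\partial\Sigma_{\theta,R_k}$, or the rescaled surfaces may run out of the fixed wedge region in the limit; the $\min(1, d_{\mathbb{R}^3}(q,\partial\Sigma_{\theta,R}))$ weighting is precisely designed so that the balls on which we obtain uniform curvature bounds have radius $\gg 1/|A(q_k)|$, keeping the limit complete, and one must check that the wedge region, rescaled about $q_k$, exhausts $\mathbb{R}^3$ (this uses that $q_k$ is at bounded Euclidean distance from the boundary only at scale $1$, not at the blow-up scale). Second, one must ensure the limit surface is genuinely obtained as a smooth limit of \emph{graphs}: away from the blow-up point the curvature is bounded, so by the standard minimal surface regularity (e.g. via the Schoen curvature estimate localized, or Allard-type arguments) pieces of $\Sigma_{\theta,R_k}$ near $q_k$ are uniformly graphical over their tangent planes with uniform $C^{2,\alpha}$ bounds, giving the $C^2_{loc}$ subconvergence.

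\medskip

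\noindent\emph{Main obstacle.} I expect the principal difficulty to be the area-growth estimate for the blow-up limit: Proposition~\ref{areaest} bounds the area only inside the \emph{bounded} truncated wedges $\triangle_\theta(\phi,\epsilon,\delta)$, and one must carefully combine it with the nesting (Lemma~\ref{monotone}) and the convex-hull-type confinement (Lemma~\ref{convex}) to extract a bound of the form $\mathrm{Area}_{g}(\Sigma_{\theta,R}\cap B_\rho(q_k)) \leq C\rho^2$ with $C$ independent of $k$ and $\rho$ — especially controlling what happens as the blow-up point approaches the horizon or the vertex where the planes meet. Getting this uniform quadratic bound, and hence passing to a stable limit with controlled area, is the crux; the rest is by now routine blow-up machinery.
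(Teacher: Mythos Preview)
Your blow-up strategy is exactly the paper's, and the core steps---rescale in the Euclidean metric, show the limit is a complete stable minimal surface in $\mathbb{R}^3$, invoke do Carmo--Peng / Fischer-Colbrie--Schoen to force it to be a plane---are correct. However, you have misidentified the main obstacle. The paper's proof makes \emph{no} use of Proposition~\ref{areaest}, Lemma~\ref{convex}, or Lemma~\ref{monotone}: neither an area bound nor a quadratic-area-growth estimate for the blow-up limit is needed anywhere. Smooth subsequential convergence follows directly from the uniform bound $|A_{\widetilde\Sigma_i}| < 8$ on each $B_l(O)$ (bounded second fundamental form makes the surfaces uniformly locally graphical, and the mean curvature tends to zero by an explicit conformal-factor computation using $|x|\ge \tfrac{m}{2}$), and the Bernstein-type theorem for complete stable minimal surfaces in $\mathbb{R}^3$ carries no area hypothesis. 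So the step you flag as ``the crux'' is simply absent from the argument; your parenthetical ``Alternatively\ldots'' is in fact the whole story.

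Two further simplifications relative to your sketch. First, no Schoen-type point-picking is needed: each $\overline{\Sigma_{\theta,R}}$ is compact and the weighted quantity $|A^M|_g\cdot\min(1,d_{\mathbb{R}^3}(\cdot,\partial\Sigma_{\theta,R}))$ vanishes on the boundary, so the supremum is attained at an interior point $q_i$, and the choice of $q_i$ as the exact maximizer already gives the comparable curvature bound on growing balls. Second, the passage of stability to the limit (your one-line claim that ``ambient curvature terms vanish in the blow-up'') is carried out concretely: the paper rewrites the $g$-stability inequality of $\Sigma_{\theta,R_i}$ entirely in Euclidean terms via the conformal factor, bounds the Ricci contribution by $-\mathrm{Ric}_g(\nu_g,\nu_g)\le 2m\,|X|^{-3}(1+\tfrac{m}{2|X|})^{-6}$, rescales, and lets $\lambda_i\to\infty$ so that every lower-order correction term tends to zero, yielding the Euclidean stability inequality for $\widetilde\Sigma_\infty$.
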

\begin{proof}
Suppose that $C>0$ does not exist. One can find a sequence $R_i\nearrow\infty$ such that
\begin{align}\label{supgoinf}
\sup_{q\in\Sigma_{\theta,{R_i}}}\left(\left|A^M_{\Sigma_{\theta,{R_i}}}(q)\right|_{g}\cdot\min\left(1,d_{\mathbb{R}^3}(q,\partial\Sigma_{\theta,{R_i}})\right)\right)\to\infty
\end{align}
as $i\to\infty$.
Since $\overline{\Sigma_{\theta,R_i}}$ is compact and $d_{\mathbb{R}^3}(q, \partial\Sigma_{\theta,{R_i}})=0$ for each $q\in\partial\Sigma_{\theta,{R_i}}$, there exists an interior point $q_i\in\Sigma_{\theta,{R_i}}$ where the supremum is attained. Let $\lambda_i:=|A^M_{\Sigma_{\theta,{R_i}}}(q_i)|_g$. Consequently, (\ref{supgoinf}) implies that $\lambda_i\to\infty$ as $i\to\infty$.

From now on, we consider $\Sigma_{\theta,{R_i}}$ as a surface in $\mathbb{R}^3$ with the Euclidean metric and define
\begin{align*}
\widetilde{\Sigma}_i:=\lambda_i\left(\Sigma_{\theta,{R_i}}-q_i\right).
\end{align*}
The mean curvature vector at $X\in\widetilde{\Sigma}_i$ is given by
\begin{align*}
\vec{H}_{\widetilde{\Sigma}_i}(X)&=\vec{H}_{\lambda_i\left(\Sigma_{\theta,{R_i}}-q_i\right)}(X)\\
&=\frac{1}{\lambda_i}\vec{H}_{\Sigma_{\theta,R_i}}\left(\frac{1}{\lambda_i}X+q_i\right)\\
&=\frac{-2m\lambda_i}{|X+\lambda_iq_i|^3\cdot\left(1+\frac{m\lambda_i}{2|X+\lambda_iq_i|}\right)}\cdot(X+\lambda_iq_i)^{\perp}.
\end{align*}
Since $\frac{1}{\lambda_i}X+q_i\in\Sigma_{\theta,R_i}\subset\mathbb{R}^3\backslash B_{\frac{m}{2}}(O)$, we have
\begin{align}\label{ddist}
\left|\frac{1}{\lambda_i}X+q_i\right|\geq\frac{m}{2},
\end{align}
which leads to
\begin{align*}
\left|\vec{H}_{\widetilde{\Sigma}_i}(X)\right|\leq\frac{2m\lambda_i\left|(X+\lambda_iq_i)^{\perp}\right|}{|X+\lambda_iq_i|^3\cdot\left(1+\frac{m\lambda_i}{2|X+\lambda_iq_i|}\right)}\leq\frac{4}{m\lambda_i}.
\end{align*}
Hence,
\begin{align}\label{Hgozero}
\sup_{\widetilde{\Sigma}_i}|\vec{H}_{\widetilde{\Sigma}_i}|\to0
\end{align}
as $i\to\infty$. On the other hand, one may compute
\begin{align}\label{AA}
|A_{\widetilde{\Sigma}_i}(X)|&=\left(1+\frac{m\lambda_i}{2|X+\lambda_iq_i|}\right)^2\cdot\frac{1}{\lambda_i}\cdot\left|A^M_{\Sigma_{\theta,R_i}}\left(\frac{1}{\lambda_i}X+q_i\right)\right|_g\\
&\leq 4\cdot\frac{1}{\lambda_i}\cdot\left|A^M_{\Sigma_{\theta,R_i}}\left(\frac{1}{\lambda_i}X+q_i\right)\right|_g,\nonumber
\end{align}
where the inequality follows from (\ref{ddist}). Fix a positive real number $l>0$. By (\ref{supgoinf}),
\begin{align*}
\lambda_id_{\mathbb{R}^3}(q_i,\partial\Sigma_{\theta, R_i})=d_{\mathbb{R}^3}(O,\partial\widetilde{\Sigma}_i)\to\infty,
\end{align*}
which implies that $\partial\widetilde{\Sigma}_i\cap B_l(O)=\emptyset$ for sufficiently large $i$. Therefore if $X\in\widetilde{\Sigma}_i\cap B_l(O)$, the assumption on $q_i$ yields
\begin{align*}
4\cdot\frac{1}{\lambda_i}\cdot\left|A^M_{\Sigma_{\theta,R_i}}\left(\frac{1}{\lambda_i}X+q_i\right)\right|_g&\leq4\cdot\frac{\min(1,d_{\mathbb{R}^3}(q_i,\partial\Sigma_{\theta,R_i}))}{\min(1,d_{\mathbb{R}^3}(\frac{1}{\lambda_i}X+q_i,\partial\Sigma_{\theta,R_i}))}\\
&=4\cdot\frac{\min(\lambda_i,d_{\mathbb{R}^3}(O,\partial\widetilde{\Sigma}_i))}{\min(\lambda_i,d_{\mathbb{R}^3}(X,\partial\widetilde{\Sigma}_i))},
\end{align*}
and by applying the triangle inequality, we obtain
\begin{align*}
4\cdot\frac{\min(\lambda_i,d_{\mathbb{R}^3}(O,\partial\widetilde{\Sigma}_i))}{\min(\lambda_i,d_{\mathbb{R}^3}(X,\partial\widetilde{\Sigma}_i))}\leq4\cdot\frac{d_{\mathbb{R}^3}(O, \partial\widetilde{\Sigma}_i)}{d_{\mathbb{R}^3}(O, \partial\widetilde{\Sigma}_i)-l}.
\end{align*}
Combining all the above inequalities, we get
\begin{align*}
|A_{\widetilde{\Sigma}_i}(X)|\leq4\cdot\frac{d_{\mathbb{R}^3}(O, \partial\widetilde{\Sigma}_i)}{d_{\mathbb{R}^3}(O, \partial\widetilde{\Sigma}_i)-l}
\end{align*}
for every $X\in\widetilde{\Sigma}_i\cap B_l(O)$, and thus we deduce that
\begin{align}\label{Aunifbdd}
\sup_{\widetilde{\Sigma}_i\cap B_l(O)}|A_{\widetilde{\Sigma}_i}|<8
\end{align}
for sufficiently large $i$.

Now the smooth compactness theorem can be applied to $\widetilde{\Sigma}_i\cap B_l(O)$'s by (\ref{Hgozero}) and (\ref{Aunifbdd}). Furthermore, by applying diagonal arguments, we can eventually obtain a subsequence $\widetilde{\Sigma}_i$ (using the same notation) that converges to a smooth complete minimal surface $\widetilde{\Sigma}_{\infty}$ in $\mathbb{R}^3$. By Lemma \ref{stabinf} below, $\widetilde{\Sigma}_{\infty}$ is also stable. Then the Bernstein theorem implies that $\widetilde{\Sigma}_{\infty}$ must be a plane. However, in (\ref{AA}),
\begin{align*}
|A_{\widetilde{\Sigma}_i}(O)|=\left(1+\frac{m}{2|q_i|}\right)^2,
\end{align*}
and therefore $|A_{\widetilde{\Sigma}_{\infty}}(O)|\geq 1$. This is a contradiction.
\end{proof}

\begin{lemma}\label{stabinf}
$\widetilde{\Sigma}_{\infty}$ is a stable minimal surface in $\mathbb{R}^3$.
\end{lemma}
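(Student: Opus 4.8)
The plan is to exploit the standard fact that a blow-up of stable minimal surfaces, taken along a sequence of scales for which the rescaled ambient metric converges to a constant multiple of the Euclidean metric, produces a stable minimal surface in $\mathbb{R}^3$. To begin with, since $\Sigma_{\theta,R_i}$ is an area-minimizing disk with boundary $\Gamma_{\theta,R_i}$, it is stable in its interior with respect to the Schwarzschild metric $g=u^4 g_{\mathbb{R}^3}$, $u=1+\tfrac{m}{2|x|}$; that is,
\begin{align*}
\int_{\Sigma_{\theta,R_i}}\bigl(|\nabla^{g}\psi|_{g}^2-(|A^{M}_{\Sigma_{\theta,R_i}}|_{g}^2+\operatorname{Ric}_{g}(\nu,\nu))\psi^2\bigr)\,dA_{g}\ge 0
\end{align*}
for every $\psi\in C_c^\infty(\Sigma^{\mathrm{int}}_{\theta,R_i})$.

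Next I would transport this to $\widetilde{\Sigma}_i$. The affine map $\Phi_i(x)=\lambda_i(x-q_i)$ is an isometry of $(\mathbb{R}^3\setminus\{0\},g)$ onto $(\mathbb{R}^3\setminus\{-\lambda_i q_i\},g_i)$ with $g_i=(\Phi_i)_*g=\lambda_i^{-2}\,\tilde u_i^4\,g_{\mathbb{R}^3}$, where $\tilde u_i(X)=u(\tfrac{1}{\lambda_i}X+q_i)=1+\frac{m\lambda_i}{2|X+\lambda_i q_i|}$. Since $\widetilde{\Sigma}_i=\Phi_i(\Sigma_{\theta,R_i})$ and stability is unchanged under a constant homothety of the ambient metric, $\widetilde{\Sigma}_i$ is stable with respect to $h_i:=\lambda_i^2 g_i=\tilde u_i^4 g_{\mathbb{R}^3}$. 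By Lemma \ref{convex} one has $|q_i|\ge\tfrac{m}{2}$, so $\lambda_i|q_i|\to\infty$; hence on any ball $B_l(O)$ we get $|X+\lambda_i q_i|=\lambda_i|q_i|(1+O(l/\lambda_i))$, which yields $\tilde u_i\to\mu$ and $\nabla^k\tilde u_i\to 0$ ($k\ge1$) locally uniformly, after passing to a further subsequence along which $1+\tfrac{m}{2|q_i|}\to\mu\in[1,2]$. Therefore $h_i\to h_\infty:=\mu^4 g_{\mathbb{R}^3}$ in $C^\infty_{\mathrm{loc}}$; in particular $\operatorname{Ric}_{h_i}\to\operatorname{Ric}_{h_\infty}=0$ locally uniformly.

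Finally I would pass to the limit. With the notation and the smooth convergence $\widetilde{\Sigma}_i\to\widetilde{\Sigma}_\infty$ on compact subsets of $\mathbb{R}^3$ established in the proof of Proposition \ref{curvest} (where also $\partial\widetilde{\Sigma}_i\cap B_l(O)=\emptyset$ for large $i$), take $f\in C_c^\infty(\widetilde{\Sigma}_\infty)$ with support in $B_l(O)$ and transplant it to $f_i\in C_c^\infty(\widetilde{\Sigma}_i)$ by composing with the nearest-point projection $\widetilde{\Sigma}_\infty\to\widetilde{\Sigma}_i$, which is a diffeomorphism onto its image converging to the identity in $C^2$ on $B_{l+1}(O)$; then $f_i\to f$ with supports in a fixed compact set. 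Applying stability of $\widetilde{\Sigma}_i$ with respect to $h_i$ to $f_i$ and letting $i\to\infty$, the $C^\infty_{\mathrm{loc}}$ convergences $h_i\to h_\infty$ and $\widetilde{\Sigma}_i\to\widetilde{\Sigma}_\infty$ force the gradient term, the second fundamental form, the Ricci term and the area element to converge to their $h_\infty$-counterparts on $\widetilde{\Sigma}_\infty$, giving
\begin{align*}
\int_{\widetilde{\Sigma}_\infty}\bigl(|\nabla^{h_\infty}f|_{h_\infty}^2-|A^{h_\infty}_{\widetilde{\Sigma}_\infty}|_{h_\infty}^2\,f^2\bigr)\,dA_{h_\infty}\ge 0,
\end{align*}
the Ricci term having dropped out because $h_\infty$ is flat. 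Since $h_\infty=\mu^4 g_{\mathbb{R}^3}$ differs from $g_{\mathbb{R}^3}$ only by a constant homothety, under which minimality and the sign of the second variation are both preserved, this shows $\widetilde{\Sigma}_\infty$ is a stable minimal surface in $(\mathbb{R}^3,g_{\mathbb{R}^3})$.

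The main obstacle is the metric-convergence step: one must verify that, after undoing the $\lambda_i^{-2}$ degeneration, the rescaled metrics $h_i$ converge in $C^\infty_{\mathrm{loc}}$ to a constant multiple of the Euclidean metric, which is exactly where the lower bound $|q_i|\ge\tfrac{m}{2}$ from Lemma \ref{convex} enters. Once this is in hand, transplanting test functions and passing to the limit in the second-variation functional is routine, given the smooth convergence $\widetilde{\Sigma}_i\to\widetilde{\Sigma}_\infty$ already produced in the proof of Proposition \ref{curvest}.
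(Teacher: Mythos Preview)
Your argument is correct, but it takes a genuinely different route from the paper. The paper first bounds $-\operatorname{Ric}_g(\nu_g,\nu_g)$ by the explicit radial expression $\tfrac{2m}{|X|^3(1+m/2|X|)^{6}}$, then rewrites \emph{every} term of the stability inequality in Euclidean quantities via the conformal change formulas (so that $|A^M_{\Sigma}|_g^2$ becomes $|A_{\Sigma}|^2$ plus an explicit correction), substitutes $\tilde f(X)=f(\lambda_i^{-1}X+q_i)$, and tracks the resulting explicit error terms, all of which are seen to vanish as $\lambda_i\to\infty$ using $|X+\lambda_i q_i|\ge \tfrac{m}{2}\lambda_i$. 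You instead push forward the metric and observe that, after removing the homothety factor $\lambda_i^{-2}$, the rescaled metrics $h_i=\tilde u_i^{4}g_{\mathbb R^3}$ converge in $C^\infty_{\mathrm{loc}}$ to a constant multiple of $g_{\mathbb R^3}$; stability then passes to the limit by soft convergence of the second-variation functional, and the Ricci term drops out because the limit is flat. Your approach is cleaner and avoids the explicit conformal computation of the second fundamental form, at the price of needing a further subsequence to stabilize $\mu=\lim(1+\tfrac{m}{2|q_i|})$ (harmless here, since stability is a property of the fixed limit $\widetilde\Sigma_\infty$). The paper's approach, on the other hand, yields concrete error terms and reuses the same Ricci-in-Euclidean-form estimate later in the proof of Theorem~\ref{piece}; indeed the remark following the lemma highlights this Euclidean rewriting of the stability inequality as the new ingredient.
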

\begin{proof}
Consider the following stability inequality of $\Sigma_{\theta,R}$ in $M$:
\begin{align*}
\int_{\Sigma_{\theta,R}}\left(|A^M_{\Sigma_{\theta,R}}|_g^2+\text{Ric}_g(\nu_g,\nu_g)\right)f^2\leq\int_{\Sigma_{\theta,R}}|\nabla^{\Sigma_{\theta,R}}f|_g^2
\end{align*}
for every $f\in C^{\infty}_c(\Sigma_{\theta,R})$. Here, $\nu_g$ denotes the unit normal vector of $\Sigma_{\theta,R}$ in $M$. Since the Ricci curvature of $M$ is the smallest in the radial direction, we have
\begin{align}\label{ricci}
-\text{Ric}_g(\nu_g,\nu_g)(X)\leq\frac{2m}{|X|^3\left(1+\frac{m}{2|X|}\right)^6}
\end{align}
at each $X\in\Sigma_{\theta,R}$, where the right-hand side is the negative of the Ricci curvature along the radial direction. Substituting (\ref{ricci}) into the above stability inequality and expressing all the terms with respect to the Euclidean metric, it follows that
\begin{align*}
\int_{\Sigma_{\theta,R}}\left(|A_{\Sigma_{\theta,R}}|^2-\frac{2m^2(X\cdot\nu)^2}{|X|^6\left(1+\frac{m}{2|X|}\right)^2}\right)f^2\leq \int_{\Sigma_{\theta,R}}|\nabla^{\Sigma_{\theta,R}}f|^2+\frac{2m}{|X|^3\left(1+\frac{m}{2|X|}\right)^2}f^2,
\end{align*}
where $\nu$ denotes the unit normal vector in the Euclidean space. Let $\tilde{f}(X):=f\left(\frac{1}{\lambda_i}X+q_i\right)$ for $X\in\widetilde{\Sigma}_i$. Then $\tilde{f}\in C^{\infty}_c(\widetilde{\Sigma}_i)$, and the inequality eventually becomes
\begin{align*}
\int_{\widetilde{\Sigma}_i}\left(|A_{\widetilde{\Sigma}_i}|^2-\frac{2m^2(X\cdot\nu+\lambda_i(q_i\cdot\nu))^2}{|X+\lambda_iq_i|^6\left(\frac{1}{\lambda_i}+\frac{m}{2|X+\lambda_iq_i|}\right)^2}\right)\tilde{f}^2\\\leq\int_{\widetilde{\Sigma}_i}|\nabla^{\widetilde{\Sigma}_i}\tilde{f}|^2+\frac{2m}{\lambda_i|X+\lambda_iq_i|^3\left(\frac{1}{\lambda_i}+\frac{m}{2|X+\lambda_iq_i|}\right)^2}\tilde{f}^2.
\end{align*}
Since $\widetilde{\Sigma}_i$ converges to $\widetilde{\Sigma}_{\infty}$ smoothly and $\lambda_i\to\infty$ as $i\to\infty$, we deduce that
\begin{align*}
\int_{\widetilde{\Sigma}_{\infty}}|A_{\widetilde{\Sigma}_{\infty}}|^2f^2\leq\int_{\widetilde{\Sigma}_{\infty}}|\nabla^{\widetilde{\Sigma}_{\infty}}f|^2
\end{align*}
for every $f\in C_c^{\infty}(\widetilde{\Sigma}_{\infty})$. Therefore $\widetilde{\Sigma}_{\infty}$ is stable.
\end{proof}

\begin{rmk}
\normalfont The idea of performing scaling and the limiting process in the Euclidean space rather than in the Schwarzschild manifold was motivated by \cite{HMW}. What is new in our proof is that the stability inequality of $\Sigma_{\theta,R}$ is also examined in the Euclidean space, as shown in the proof of Lemma \ref{stabinf}.
\end{rmk}


\section{Construction of a fundamental piece}\label{sec5}
\setcounter{equation}{0}
In this section, we investigate the limit of $\Sigma_{\theta,R}$ as $R\to\infty$. We restrict our attention to angles $\theta$ of the form $\frac{\pi}{N}$, where $N\geq2$ is an integer. Let $\{\Sigma_{\theta,R_i}\}_{i=1}^{\infty}$ be an arbitrary sequence such that $R_i\nearrow\infty$. To use the curvature estimate from Proposition \ref{curvest}, we introduce an increasing sequence of domains $\triangle_{\theta,j}:=\triangle_\theta(\phi_j,\epsilon_j,\delta_j)$ (defined in Section \ref{sec3}) such that $\phi_j, \epsilon_j$, and $\delta_j$ are sequences of positive real numbers satisfying
\begin{align*}
\phi_j \searrow 0, \quad \epsilon_j \searrow 0 , \quad \delta_j \nearrow \infty, \quad \epsilon_j < \delta_j.
\end{align*}
We may assume that $\Sigma_{\theta,R_i}$ and $\partial\triangle_{\theta,j}$ intersect transversally for all $i,j\geq1$.

Let us define $W_{\theta}:=\cup_{j=1}^{\infty}\triangle_{\theta,j}$. For each $j$, the distances between $\triangle_{\theta,j}$ and $\partial\Sigma_{\theta,R_i}$ are bounded below by a positive constant depending only on $j$. Hence, Proposition \ref{curvest} implies the existence of $C_j>0$ such that
\begin{align}\label{curvestj}
\sup_{\Sigma_{\theta,R_i}\cap\triangle_{\theta,j}}\left|A^M_{\Sigma_{\theta,R_i}}\right|_{g}<C_j
\end{align}
for all $i\geq1$.
Combining the curvature estimate (\ref{curvestj}) with the area estimate in Proposition \ref{areaest}, we can apply the smooth compactness theorem in each $\triangle_{\theta,j}$. By employing a diagonal argument, we obtain a subsequence $\Sigma_{\theta,R_i}$ (using the same notation) such that $\Sigma_{\theta, R_i}$ converges smoothly to $\Sigma_\theta$ in every compact subset of $W_\theta$ as $i\to\infty$.

We observe that $\Sigma^{\text{int}}_{\theta}$ has a finite number of connected components. Indeed, let $\widetilde{\triangle}_{\theta,j}$ be a domain defined similarly to $\triangle_{\theta,j}$ but with constants $R_1$ and $R_2$ instead of $\epsilon_j$ and $\delta_j$. It suffices to show that $\Sigma_\theta\cap K$ has finitely many components, where $K:=\left(\cup_{j=1}^{\infty}\overline{\widetilde{\triangle}_{\theta,j}}\right)$. Proposition \ref{areaest} implies that $\Sigma_\theta\cap\overline{\widetilde{\triangle}_{\theta,j}}$ has a finite number of components. If there were infinitely many components in $\Sigma_\theta\cap K$, there should be new components in $K\setminus\widetilde{\triangle}_{\theta,j}$ for infinitely many $j$. This implies that these new components approach $P_0$ as $j$ increases. However, by Lemma \ref{monotone}, for some $R_0$, $\Sigma_{\theta,R_0}\cap K$ acts as a barrier separating $P_0$ from $\Sigma_\theta\cap K$. This is a contradiction. Therefore, we have the following convergence result:
\begin{align}\label{multlimit}
\Sigma_{\theta,R_i}\to\Sigma_\theta:=n_1T_1+\cdots+n_lT_l
\end{align}
smoothly in every compact subset of $W_\theta$ as $i\to\infty$. Here, $T_i$'s are connected smooth oriented minimal surfaces.

We note that a similar convergence result can be obtained using varifold convergence. In \cite{AS}, interior regularity for the limit of an almost minimizing sequence of disks was established using the filigree lemma and the replacement theorem (Lemma 3 and Theorem 1 in \cite{AS}). In \cite{MSY}, it was demonstrated how the arguments of \cite{AS} can be modified to homogeneous regular $3$-manifolds, proving the persistence of interior regularity. It is evident that the Schwarzschild $3$-manifold is homogenous regular. Nevertheless, to obtain the complete description of the limit surface as in (\ref{multlimit}), it is necessary to employ similar arguments throughout this paper.

However, to study the behavior of $\Sigma_\theta$ at the boundary $\Gamma_\theta:=\bigcup_{R>\frac{m}{2}}\gamma_{0,R}\cup\gamma_{\theta,R}$, we need to consider it as a varifold. We can obtain boundary regularity similar to that demonstrated in \cite{AS} with the same modifications as in \cite{MSY}. Let $M$ be isometrically embedded in some $\mathbb{R}^{N_0}$, and let $p\in\Gamma_\theta$. As in \cite{AS}, denote
\begin{align*}
C_+=\lim_{k\to\infty}\mu_{r_k\#}v(\Sigma_\theta)\ (r_k\to\infty\ \text{as}\ k\to\infty),
\end{align*}
where $v(\Sigma_\theta)$ is a varifold corresponding to $\Sigma_\theta$, $\mu_r:\mathbb{R}^{N_0}\to\mathbb{R}^{N_0}$ is a map defined by $\mu_r(x)=r(x-p)$, and $C_+$ is any varifold tangent to $v(\Sigma_\theta)$ at $p\in\Gamma_\theta$. The main difference between our case and \cite{AS} is that we have to send it to the limit while applying the filigree lemma and the replacement theorem in each $\mu_{r_k}(M)$. Since $M$ is simply enlarged, Lemma 1 and Lemma 3 in \cite{MSY} can be uniformly applied to $\mu_{r_k}(M)$ for all sufficiently large $k$. Furthermore, $\Gamma_\theta$ is a piecewise smooth geodesic in $M$ with angles of the form $\frac{\pi}{l}$ ($l\in\mathbb{Z}_{>0}$) at singular points, and it lies on the boundary of the convex domain $W_\theta$. Therefore, a similar argument as in \cite{AS} can be applied to prove that $C_{+}=\sum_{i=1}^{s}H_i$, where $H_i\subset T_pM\subset\mathbb{R}^{N_0}$ is a convex cone in some $2$-plane such that $\partial H_i$ is a tangent cone of $\Gamma_\theta$ at $p$. Moreover, calculating the density of $C_+$ at $p$ in a similar manner as in \cite{AS} gives $s\leq2$, which implies that $s=1$ due to orientation. Thus, in (\ref{multlimit}), we have the following result:
\begin{lemma}
$\Sigma_\theta=T_1$.
\end{lemma}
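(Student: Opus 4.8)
The plan is to read the lemma off from the tangent-cone description set up in the discussion above, supplemented by one short application of the barriers of Lemma~\ref{monotone}. I would split the argument into two steps: first upgrade the multiplicity-one tangent-cone statement along $\Gamma_\theta$ into a genuine local regularity statement for $v(\Sigma_\theta)$, and then eliminate any component of $\Sigma_\theta$ whose closure misses $\Gamma_\theta$.

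For the first step, recall from the preceding discussion that at every $p\in\Gamma_\theta$ the varifold tangent $C_+$ to $v(\Sigma_\theta)$ is a single convex cone of unit multiplicity whose edge is the tangent cone of $\Gamma_\theta$ at $p$ (the density computation yields $s\le 2$, and orientability of the limit improves this to $s=1$). By the Allard-type boundary regularity theorem in the homogeneous-regular setting --- carried out, as indicated above, on the rescaled ambient manifolds $\mu_{r_k}(M)$ in the manner of \cite{AS, MSY} --- the uniqueness and unit multiplicity of these tangent cones force $v(\Sigma_\theta)$ to coincide, in a one-sided neighbourhood of $\Gamma_\theta$, with a single embedded $C^{1,\alpha}$ minimal surface-with-boundary, smooth away from the finitely many corners of $\Gamma_\theta$ by Schauder theory, whose boundary is precisely $\Gamma_\theta$ counted once. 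In the interior decomposition $\Sigma_\theta=n_1T_1+\cdots+n_lT_l$ this means that exactly one component, say $T_1$, has $\Gamma_\theta$ in its closure, and that it occurs with multiplicity $n_1=1$.

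For the second step, suppose a component $T_j$ with $j\ge 2$ existed; then $\overline{T_j}\cap\Gamma_\theta=\emptyset$. Since the circular arc $C_{\theta,R_i}$ of $\partial\Sigma_{\theta,R_i}$ leaves every compact subset of $W_\theta$ as $R_i\to\infty$, and the interior curvature estimate of Proposition~\ref{curvest} precludes any concentration of $|A|$ away from $\Gamma_\theta$, the convergence $\Sigma_{\theta,R_i}\to\Sigma_\theta$ is smooth near $T_j$ and $T_j$ is a complete, properly embedded minimal surface contained in $W_\theta\setminus\overline{\Gamma_\theta}$. On the other hand, Lemma~\ref{monotone}(1) shows that $\Sigma_\theta$ lies weakly above $\Sigma_{\theta,R_0}$ for every $R_0>\frac{m}{2}$, while Lemma~\ref{convex} confines $\Sigma_\theta$ to the wedge bounded by $Q_0$, $Q_\theta$, $P_0$ and the horizon; thus $T_j$ is trapped between the monotone family $\{\Sigma_{\theta,R}\}_{R>m/2}$, which increases up to $\Sigma_\theta$, and the totally geodesic faces of that wedge. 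Reflecting $T_j$ repeatedly across $Q_0$, $Q_\theta$, $P_0$ and the horizon produces a complete minimal surface in $\widehat{M}$ disjoint from a monotone family of minimal surfaces, so a first-contact argument contradicts the strong maximum principle unless $T_j=\emptyset$. Hence $l=1$ and $\Sigma_\theta=T_1$.

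The step I expect to be the real obstacle is the first one: pushing the Allard--Almgren--Meeks--Simon--Yau boundary-regularity machinery through while the ambient metric is simultaneously being dilated along $\mu_{r_k}$, and carrying out the density computation for $C_+$ at the (possibly cornered) boundary points precisely enough to obtain $s\le 2$ and then $s=1$. This is exactly the analysis outlined just before the statement, and essentially all of the difficulty of the lemma sits there; the removal of spurious interior components in the second step uses only the a priori curvature and area estimates already established, together with the maximum principle, and is routine by comparison.
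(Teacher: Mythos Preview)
Your Step~1 is exactly the paper's argument: the lemma is stated immediately after the density computation gives $s=1$, with no further proof supplied, so the paper treats ``$s=1$ along $\Gamma_\theta$'' as equivalent to ``$\Sigma_\theta=T_1$''. Your observation that this implication is not automatic, and that one must separately exclude components whose closure misses $\Gamma_\theta$, is well taken; the paper simply does not address it.

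The difficulty is that your Step~2, as written, does not work. Since $T_j\subset W_\theta$ lies strictly inside the open wedge (Lemma~\ref{convex} passes to the limit), $T_j$ has \emph{no} boundary on $Q_0$, $Q_\theta$, $P_0$ or $\partial M$; reflecting it across those faces therefore yields a disjoint union of isometric copies, not a single connected complete surface to which a maximum-principle argument could be applied. Nor is it clear what ``monotone family of minimal surfaces'' you intend the reflected object to be disjoint from: the family $\{\Sigma_{\theta,R}\}$ increases to $\Sigma_\theta$ itself, so for large $R$ these surfaces are arbitrarily close to $T_j$, not separated from it. A cleaner route to rule out such a $T_j$ is to use Lemma~\ref{monotone}(2) directly: choose $\phi_0$ so that $P_{\theta,\phi_0}$ meets $T_j$, note that $A_{\theta,\phi_0},B_{\theta,\phi_0}\in\Gamma_\theta$ are fixed (independent of $R_i$), and observe that the single simple arc $\Sigma_{\theta,R_i}\cap P_{\theta,\phi_0}$ joining these two boundary points must, in the smooth limit together with the boundary regularity of Step~1, connect $T_j$ to the boundary sheet $T_1$ through $\Sigma_\theta\cap P_{\theta,\phi_0}$, contradicting $\overline{T_j}\cap\Gamma_\theta=\emptyset$. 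This replaces the reflection/first-contact argument and stays within the tools already set up in Section~\ref{sec3}.
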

Furthermore, we can show that $T_1$ is simply-connected.
\begin{lemma}
$T_1$ is simply-connected.
\end{lemma}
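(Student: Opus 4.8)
The plan is to show that $T_1$, being the smooth limit of the area-minimizing disks $\Sigma_{\theta,R_i}$, cannot contain a nontrivial homotopy loop, by exploiting the same cut-and-paste philosophy used throughout Section \ref{sec3} together with the planar symmetries of the $\Sigma_{\theta,R}$'s. First I would record what $T_1$ looks like near its boundary: by the boundary regularity analysis just completed, $T_1$ meets $\Gamma_\theta$ along the two geodesic arcs $\gamma_{0,R}$, $\gamma_{\theta,R}$ (in the limit $R\to\infty$) from one side, and it lies inside the convex region $W_\theta$ bounded by $Q_0$, $Q_\theta$, $P_0$ and the horizon $\partial M$ (Lemma \ref{convex}, passed to the limit). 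In particular $T_1$ is a properly embedded surface-with-boundary in $W_\theta$ whose boundary is the single piecewise-geodesic arc $\Gamma_\theta$. So it suffices to show $T_1$ has no interior handle and no interior loop that fails to bound.

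Next I would argue directly on the limit. Suppose $\sigma\subset T_1^{\mathrm{int}}$ is a simple closed curve that does not bound a disk in $T_1$. Since $\sigma$ is compact it lies in some $\triangle_{\theta,j}$, hence in $T_1\cap\triangle_{\theta,j}$, which by the smooth convergence is a smooth limit of $\Sigma_{\theta,R_i}\cap\triangle_{\theta,j}$. The key structural input is Lemma \ref{monotone}(2): every totally geodesic plane $P_{\theta,\phi}$ ($\phi\in(0,\frac\pi2)$) meets each $\Sigma_{\theta,R_i}$ in a \emph{simple arc} joining $A_{\theta,\phi}$ to $B_{\theta,\phi}$, and passing to the limit $P_{\theta,\phi}$ meets $T_1$ in a collection of simple arcs and closed curves whose only boundary points are $A_{\theta,\phi},B_{\theta,\phi}$ — in fact (by the same cut-and-paste argument applied to the limit, which is itself area-minimizing for its boundary, or by a direct limiting argument from the uniform statement) the intersection stays a single simple arc. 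The family $\{P_{\theta,\phi}\}_{\phi\in[0,\pi/2)}$ foliates $W_\theta$ (together with the limiting plane $P_0$ and the complementary half $P_{\theta,\phi}$ with the opposite orientation of the normal), so composing the defining angle function $\phi$ with the embedding $T_1\hookrightarrow W_\theta$ gives a function $h:T_1\to[0,\tfrac\pi2)$ each of whose level sets is a single simple arc running from boundary to boundary. A surface admitting a function all of whose level sets are arcs with endpoints on the boundary is a disk: cutting along one regular level arc yields a surface which retracts onto the remaining boundary, and an induction on the number of critical points (each of which, by the monotone-arc property, can only be a half-saddle on the boundary, never an interior saddle or an interior maximum/minimum that would create a closed level curve) shows $T_1$ is simply-connected. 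This is essentially a Morse-theoretic packaging of the fact that the monotone family of planes sweeps $T_1$ out by arcs.

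Concretely the steps are: (i) transfer Lemmas \ref{convex} and \ref{monotone} to the limit surface $T_1$, noting that $\Sigma_{\theta,R_0}$ for a fixed $R_0$ acts as a barrier and that $T_1$ inherits the planar symmetry across $Q_{\theta/2}$; (ii) show the slicing function $h$ induced by $\{P_{\theta,\phi}\}$ is (after a perturbation of $\phi_0$ to a generic value, using Sard) a Morse function on $T_1$ with boundary, whose level sets are simple arcs by Lemma \ref{monotone}(2) in the limit; (iii) rule out interior critical points of $h$: an interior local extremum would force a small closed level curve, and an interior saddle would force a level set with two arcs meeting, both contradicting the single-simple-arc conclusion; hence all critical points are on $\partial T_1=\Gamma_\theta$; (iv) conclude by the standard fact that a compact-exhausted surface with boundary carrying such a function is a disk, equivalently that $\sigma$ as above must bound, so $\pi_1(T_1)=0$. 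The main obstacle is step (ii)–(iii): making rigorous that the \emph{limit} surface (only a varifold/smooth limit, not a priori area-minimizing in the strong disk sense) still has every planar slice a single simple arc. I expect to handle this by passing the uniform statement of Lemma \ref{monotone}(2) to the limit — a closed slice curve or a slice with extra arcs in $T_1$ would appear as such in $\Sigma_{\theta,R_i}\cap P_{\theta,\phi}$ for large $i$ by smooth convergence, contradicting that this intersection is a single simple arc for every $i$ — together with the convex-barrier argument to control the boundary behavior, exactly paralleling the proof of Proposition \ref{areaest}.
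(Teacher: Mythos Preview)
Your approach is different from the paper's and considerably more elaborate. The paper argues directly from the approximating disks: given a Jordan curve $\gamma\subset T_1$ that does not bound, lift it via smooth convergence to simple closed curves $\gamma_i\subset\Sigma_{\theta,R_i}$; since each $\Sigma_{\theta,R_i}$ is a disk, $\gamma_i$ bounds a disk $D_i\subset\Sigma_{\theta,R_i}$. If the $D_i$ stayed in a fixed compact set they would subconverge to a disk in $T_1$ with boundary $\gamma$, so $\bigcup_i D_i$ must be unbounded. Hence for some $j$ one has $\partial D_j\subset B_r$ but $D_j\cap(M\setminus\overline{B_r})\neq\emptyset$. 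Now shrink the coordinate sphere $\{|x|=L\}$ from $L=\infty$; the first contact with $D_j$ is at an interior point of $D_j$, contradicting the maximum principle because those spheres have mean curvature vector pointing toward the horizon. That is the whole proof.

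Your Morse-theoretic route via the pencil $\{P_{\theta,\phi}\}$ and Lemma~\ref{monotone}(2) is plausible and would yield extra structural information (a sweep-out of $T_1$ by arcs), but it costs more: you must justify that the single-arc slice property survives in the limit (your acknowledged obstacle), deal with transversality and possible non-Morse critical points of $h$, and check that the pencil really foliates the part of $W_\theta$ containing $T_1$ (it is a pencil through a line on $\partial W_\theta$, not a genuine foliation). None of this is needed in the paper's argument, which bypasses the slicing entirely by using the mean-convex sphere foliation as a barrier against the escaping disks $D_i$. If you want the shortest correct proof, the paper's maximum-principle argument is the one to use.
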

\begin{proof}
Suppose the contrary. We may assume that there is a Jordan curve $\gamma$ in $T_1$ that represents a non-trivial homotopy class. Then, for sufficiently large $i$, there exists a simple closed curve $\gamma_i$ in each $\Sigma_{\theta,R_i}$ that converges to $\gamma$. Let $D_i\subset\Sigma_{\theta,R_i}$ be a region bounded by $\gamma_i$. Since $D_i$'s are simply-connected, for $\gamma_i$ to converge to a homotopically non-trivial curve $\gamma$ in $T_1$, $\cup_{i=1}^{\infty}D_i$ must be unbounded. This implies that one can find a region $B_r\subset M$ given by $\left\{x\in\mathbb{R}^3:\frac{m}{2}\leq|x|< r\right\}$ and $D_j\subset\Sigma_{\theta,R_j}$ such that $\gamma_j=\partial D_j\subset B_r$ and $D_j\cap\left(M\backslash\overline{B_r}\right)\neq\emptyset$. Now, if we decrease the radius $L$ of the sphere $\{x\in\mathbb{R}^3: |x|=L\}$ from infinity, we can consider the first touching point with $D_j$, which lies in the interior of $D_j$. Since the sphere has mean curvature vector towards $\partial M$, this contradicts the maximum principle.
\end{proof}
By the monotonicity property proved in Lemma \ref{monotone}, it always converges to the same surface $\Sigma_\theta$ regardless of the choice of a sequence $\Sigma_{\theta,R_i}$. As a consequence, we obtain the following theorem:
\begin{thm}\label{piece}
There exists a simply-connected smooth embedded non-compact minimal surface $\Sigma_\theta\subset W_\theta$ with $\partial \Sigma_\theta=\Gamma_\theta$. Furthermore, $\Sigma_\theta$ has finite total curvature and quadratic area growth.
\end{thm}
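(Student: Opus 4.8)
The plan is to assemble the properties already obtained in the preceding lemmas and then to prove the two quantitative assertions. The limit $\Sigma_\theta$ exists by the compactness arguments above; the varifold tangent-cone analysis at $\Gamma_\theta$ together with interior regularity identifies $\Sigma_\theta$ with a single multiplicity-one connected smooth sheet $T_1$, which is simply connected, non-compact (since $\Gamma_\theta$ is), and satisfies $\partial\Sigma_\theta=\Gamma_\theta$; and Lemma~\ref{monotone} makes $\Sigma_\theta$ independent of the sequence $R_i\nearrow\infty$. Embeddedness descends from that of the disks $\Sigma_{\theta,R_i}$: a transversal interior self-intersection of $\Sigma_\theta$ is ruled out by the cut-and-paste argument of Lemma~\ref{monotone} passed to the limit, and a tangential self-contact by the maximum principle. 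Thus only quadratic area growth and finite total curvature remain to be checked.

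For the area bound, note that $\Sigma_\theta\cap B_\rho$ (with $B_\rho:=\{|x|<\rho\}$) lies in the solid wedge cut out inside $B_\rho$ by $Q_0$, $Q_\theta$, $P_0$ and the horizon, since $\Sigma_\theta\subset W_\theta$. Exhaust that wedge from inside by the domains $\triangle_\theta(\phi,\epsilon,\rho-\tfrac m2)$ as $\phi\searrow0$, $\epsilon\searrow0$; combining Proposition~\ref{areaest} with the smooth multiplicity-one convergence $\Sigma_{\theta,R_i}\to\Sigma_\theta$ gives $\mathrm{Area}_g(\Sigma_\theta\cap\triangle_\theta(\phi,\epsilon,\delta))\le\mathrm{Area}_g(\partial\triangle_\theta(\phi,\epsilon,\delta))$, hence by monotone convergence $\mathrm{Area}_g(\Sigma_\theta\cap B_\rho)\le\lim_{\phi,\epsilon\to0}\mathrm{Area}_g(\partial\triangle_\theta(\phi,\epsilon,\rho-\tfrac m2))$. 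The latter is $O(\rho^2)$: its dominant face is the spherical cap of Euclidean radius $\tfrac m2+\delta$, whose $g$-area is comparable to $\tfrac{\theta}{2\pi}\cdot4\pi(\tfrac m2+\delta)^2$; the faces on $Q_0,Q_\theta$ are flat circular sectors of $g$-area $O(\delta^2)$; and the faces on $\partial M$ and $P_{\theta,\phi}$ contribute a bounded amount. Hence $\Sigma_\theta$ has quadratic area growth.

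For finite total curvature the backbone is the Gauss equation $K_{\Sigma_\theta}=K^{\mathrm{sec}}_M(T\Sigma_\theta)-\tfrac12|A^M_{\Sigma_\theta}|_g^2$ together with Gauss--Bonnet. Along a suitable sequence $\rho_k\to\infty$ the set $D_{\rho_k}:=\Sigma_\theta\cap\overline{B_{\rho_k}}$ is a disk (this will come out of the end analysis below), and Gauss--Bonnet on it yields $\tfrac12\int_{D_{\rho_k}}|A^M_{\Sigma_\theta}|_g^2\,dA_g=\int_{D_{\rho_k}}K^{\mathrm{sec}}_M\,dA_g-(\theta-\pi)+\int_{\Sigma_\theta\cap\partial B_{\rho_k}}\kappa_g\,ds$: the geodesic-curvature integral over $\Gamma_\theta\cap\overline{B_{\rho_k}}$ vanishes because the horizon arcs and the radial rays are geodesics of $(M,g)$, and the exterior angles sum to the $\rho_k$-independent constant $3\pi-\theta$ (the north pole contributes $\pi-\theta$, the two corners on $\partial M$ contribute $\pi/2$ each, and the two orthogonal crossings of the rays with $\partial B_{\rho_k}$ contribute $\pi/2$ each). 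Here $\int_{D_{\rho_k}}K^{\mathrm{sec}}_M\to\int_{\Sigma_\theta}K^{\mathrm{sec}}_M$ absolutely, since $|\mathrm{Rm}_M|=O(|x|^{-3})$ (consistent with the Ricci bound \eqref{ricci}) and the area grows quadratically. So it suffices to bound $\int_{\Sigma_\theta\cap\partial B_{\rho_k}}\kappa_g\,ds$ along some $\rho_k\to\infty$; then the left-hand side above is bounded and, being monotone in $\rho$, we conclude $\int_{\Sigma_\theta}|A^M_{\Sigma_\theta}|_g^2<\infty$.

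To bound that turning I would study the single end of $\Sigma_\theta$. Passing Proposition~\ref{curvest} to the limit and combining it with standard boundary regularity along the geodesic pieces of $\Gamma_\theta$ (equivalently, with Schwarz reflection across the totally geodesic surfaces $Q_0,Q_\theta,P_0,\partial M$ that carry $\Gamma_\theta$), $|A^M_{\Sigma_\theta}|_g$ is bounded on $\Sigma_\theta$ outside a compact set. Rescaling $\Sigma_\theta$ by $\lambda\searrow0$ and using the quadratic area growth together with the asymptotic flatness of $(M,g)$ (the Euclidean monotonicity formula holding up to lower-order error), the blow-downs subconverge to a stationary minimal cone in $\mathbb{R}^3$ whose boundary is the pair of rays into which $\Gamma_\theta$ blows down; the area bound forces this cone to be the flat planar sector of opening $\theta$, so the end of $\Sigma_\theta$ is asymptotic to a sector of a totally geodesic plane. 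Consequently $D_{\rho_k}$ is a disk for large $\rho_k$ and $\Sigma_\theta\cap\partial B_{\rho_k}$ tends to a circular arc of total turning $\theta$, so $\int_{\Sigma_\theta\cap\partial B_{\rho_k}}\kappa_g\,ds\to\theta$ stays bounded. (Alternatively, after the reflections a neighbourhood of infinity becomes an annular minimal end with bounded curvature and quadratic area growth, hence parabolic and conformally a punctured disk, and a Chern--Osserman-type removable-singularity argument --- legitimate because the ambient curvature is integrable over the end --- gives finite total curvature directly.) I expect this end analysis --- identifying the blow-down as the multiplicity-one flat sector and thereby bounding the turning of $\Sigma_\theta\cap\partial B_\rho$ --- to be the main obstacle: it amounts to transplanting the Euclidean finite-total-curvature theory to the asymptotically flat Schwarzschild metric, and it is precisely where the curvature decay $|\mathrm{Rm}_M|=O(|x|^{-3})$, integrable against the quadratic area growth established above, does the work; the non-compactness of $\Gamma_\theta$ is otherwise absorbed by the totally geodesic reflections.
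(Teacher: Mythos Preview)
Your treatment of the qualitative assertions and of quadratic area growth is fine and parallels the paper. (One small slip: the face of $\partial\triangle_\theta(\phi,\epsilon,\delta)$ on $P_{\theta,\phi}$ has $g$-area $O(\delta^2)$, not bounded; this does not affect the $O(\rho^2)$ conclusion.) The paper obtains the same bound slightly differently, by comparing $\mathrm{Area}_g(\Sigma_{\theta,R_1})$ and $\mathrm{Area}_g(\Sigma_{\theta,R_2})$ directly via the area-minimizing property, with the annular sector $A(R_1,R_2)\cap W_\theta\subset P_0$ as competitor.

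For finite total curvature, however, you take a substantially harder route than the paper. The paper simply notes that $\Sigma_\theta$, being a smooth limit of area-minimizing disks, is stable; plugging the Ricci bound \eqref{ricci} into the stability inequality gives
\[
\int_{\Sigma_\theta}|A^M_{\Sigma_\theta}|_g^2 f^2\ \le\ \int_{\Sigma_\theta}|\nabla^{\Sigma_\theta}f|_g^2+\frac{2m}{|X|^3\bigl(1+\tfrac{m}{2|X|}\bigr)^6}\,f^2,
\]
and since the potential is $O(|X|^{-3})$, integrable against the quadratic area growth just established, the standard logarithmic cut-off trick yields $\int_{\Sigma_\theta}|A^M_{\Sigma_\theta}|_g^2<\infty$ in a couple of lines. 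Your Gauss--Bonnet scheme instead needs as \emph{input} the asymptotic structure of the end (blow-down equal to the multiplicity-one flat sector, or a Chern--Osserman removable singularity), which in the Euclidean theory is usually deduced \emph{from} finite total curvature. Your varifold blow-down converges only as a stationary cone, and pinning it down to the flat sector with multiplicity one (e.g.\ via the link in the spherical triangle, or Allard boundary regularity) is genuine extra work --- exactly the ``main obstacle'' you flag. That program can be made to work, but it is entirely bypassed by the stability argument, which you did not use.
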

\begin{proof}
It only remains to show that $\Sigma_\theta$ has finite total curvature and quadratic area growth. Consider area-minimizing disks $\Sigma_{\theta,R_1}$ and $\Sigma_{\theta,R_2}$ defined in Section \ref{sec3}, where $\frac{m}{2}<R_1<R_2$.
The area-minimizing property implies
\begin{align*}
\text{Area}_g\left(\Sigma_{\theta,R_2}\right)-\text{Area}_g\left(A(R_1, R_2)\cap W_{\theta}\right)&<\text{Area}_g\left(\Sigma_{\theta,R_1}\right)\\
&<\text{Area}_g\left(\Sigma_{\theta,R_2}\right)+\text{Area}_g\left(A(R_1, R_2)\cap W_{\theta}\right),
\end{align*}
where $A(R_1,R_2)$ denotes the annular region in $P_0$ bounded by the spheres $\{x\in\mathbb{R}^3: |x|=R_1\}$ and $\{x\in\mathbb{R}^3: |x|=R_2\}$. As $M$ is asymptotically Euclidean, $\Sigma_\theta$ has quadratic area growth. On the other hand, the smooth convergence implies the stability of $\Sigma_\theta$. Recall that the stability inequality writes
\begin{align*}
\int_{\Sigma_\theta}\left(|A^M_{\Sigma_\theta}|_g^2+\text{Ric}_g(\nu_g,\nu_g)\right)f^2\leq\int_{\Sigma_\theta}|\nabla^{\Sigma_\theta}f|_g^2,
\end{align*}
and from (\ref{ricci}), it follows that
\begin{align*}
\int_{\Sigma_\theta}|A^M_{\Sigma_\theta}|_g^2f^2\leq\int_{\Sigma_\theta}|\nabla^{\Sigma_\theta}f|_g^2+\frac{2m}{|X|^3\left(1+\frac{m}{2|X|}\right)^6}f^2
\end{align*}
for every $f\in C_c^{\infty}(\Sigma_\theta)$. The right-most term is of the order $O\left(\frac{1}{|X|^3}\right)$. Therefore, the quadratic area growth of $\Sigma_\theta$ and the logarithmic cut-off trick imply that $\Sigma_\theta$ has finite total curvature.
\end{proof}

\section{Proof of main theorem}\label{sec6}
\setcounter{equation}{0}
In this section, we construct complete embedded minimal surfaces in the doubled Schwarzschild $3$-manifold $\widehat{M}$. As $\partial\Sigma_\theta$ consists of geodesic lines, we use the reflection principle to patch up copies of $\Sigma_\theta$. This technique has a significant historical importance, and many embedded minimal surfaces have been constructed using this approach (see \cite{Choe, HM2, HMW, L} for example).

Let $\tau$ be a positive integer, and consider the contour $\Gamma_{\frac{\pi}{\tau+1}}$ defined in Section \ref{sec5}. By Theorem \ref{piece}, we have the simply-connected minimal surface $\Sigma_{\frac{\pi}{\tau+1}}$. Now we introduce three particular isometries of $\widehat{M}$ as follows:
\begin{align*}
a=\text{Ref}_{Q_0}\circ I,\ b_\tau=\text{Ref}_{Q_{\frac{\pi}{\tau+1}}}\circ\text{Ref}_{P_{\frac{\pi}{\tau+1},0}},\ c_\tau=\text{Rot}_{\frac{2\pi}{\tau+1}},
\end{align*}
where $\text{Ref}_*$ denotes the reflection across the totally geodesic plane $*$, $\text{Rot}_{\frac{2\pi}{\tau+1}}$ is the rotation of $\frac{2\pi}{\tau+1}$ around the vector $(0, 0, 1)$, and $I$ is the inversion defined in Section \ref{pre}. Then $a$, $b_\tau$, and $c_\tau$ generate a group $G_\tau$ of isometries of order $2\tau+2$. We define
\begin{align*}
\Sigma_\tau :=\bigcup_{h\in G_\tau}h\cdot\Sigma_{\frac{\pi}{\tau+1}}.
\end{align*}

\begin{thm}\label{mainthm}
For each integer $\tau\geq1$, there exists a complete embedded minimal surface $\Sigma_{\tau}\subset\widehat{M}$ of genus $\tau$ satisfying the following properties:
\begin{enumerate}
\item $\Sigma_{\tau}$ has finite total curvature and quadratic area growth;
\item $\Sigma_{\tau}$ is asymptotic to the totally geodesic plane $P_0$;
\item $\Sigma_{\tau}\cap \partial M=\cup_{j=1}^{2\tau+2}\eta_i$, where $\eta_i$'s are geodesics on $\partial M$ passing through two poles and making equal angles of $\theta=\frac{\pi}{\tau+1}$;
\item $\Sigma_{\tau}\cap P_0=\cup_{i=1}^{2\tau+2}l_{i}$, where $l_i's$ are geodesic lines on $P_0$ perpendicular to $\partial M$ and $\eta_i$ and making  equal angles of $\theta=\frac{\pi}{\tau+1}$ at infinity; and
\item $\Sigma_{\tau}$ has a dihedral group of symmetries of order $2\tau+2$ generated by rotations across $l_i$ and $\eta_j$. $\Sigma_r$ has genus $\tau=\frac{\pi}{\theta}-1$.
\end{enumerate}
\end{thm}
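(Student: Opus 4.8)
The plan is to assemble $\Sigma_\tau$ by patching $2\tau+2$ isometric copies of $\Sigma_\theta$ ($\theta=\frac{\pi}{\tau+1}$) under the group $G_\tau$, verify that the patched object is a smooth embedded minimal surface via the reflection principle, and then compute its topology and asymptotics. First I would recall that $\partial\Sigma_\theta = \Gamma_\theta$ consists of the two piecewise-geodesic rays $\gamma_{0,\infty}$ (in $Q_0$) and $\gamma_{\theta,\infty}$ (in $Q_\theta$), each made of a geodesic segment on the horizon $\partial M$ from the ``north pole'' $(0,0,\frac m2)$ to an equatorial point, followed by a geodesic ray in $P_0$ going to infinity. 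The reflections $\mathrm{Ref}_{Q_0}$ and $\mathrm{Ref}_{Q_\theta}$ are isometries of $\widehat M$ fixing these geodesics, and since $\Sigma_\theta$ meets $Q_0$ and $Q_\theta$ orthogonally along its boundary (Lemma \ref{convex} places $\Sigma_\theta$ inside the wedge bounded by $Q_0,Q_\theta,P_0,\partial M$, and a standard argument upgrades this containment to orthogonal contact), the Schwarz reflection principle for minimal surfaces guarantees that $\Sigma_\theta \cup \mathrm{Ref}_{Q_\theta}\Sigma_\theta$ extends smoothly across $\gamma_{\theta,\infty}$ as a minimal surface. Iterating the reflections produces, after $2\tau+2$ copies, a surface that closes up smoothly: the two ``wings'' emanating from each boundary geodesic fit together because the wedge angle $\theta=\frac\pi{\tau+1}$ divides $2\pi$ evenly, so going around either the axis $\{(0,0,t)\}$ through the poles or any equatorial direction returns to the starting copy with matching first-order data.

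Next I would record the three generators and the structure of $G_\tau$: $c_\tau=\mathrm{Rot}_{2\pi/(\tau+1)}$ generates the cyclic rotations about the polar axis, $b_\tau=\mathrm{Ref}_{Q_\theta}\circ\mathrm{Ref}_{P_0}$ is a rotation by $2\theta$ about the polar axis (so it is a power-compatible companion of $c_\tau$), and $a=\mathrm{Ref}_{Q_0}\circ I$ combines the horizon inversion $I$ with a reflection — note $a$ interchanges the two sheets $M$ and $I(M)$ of $\widehat M$ and fixes the horizon geodesics, which is precisely the isometry realizing reflection across $\partial M$ in the doubled manifold. One checks $a^2=\mathrm{id}$ (since $I^2=\mathrm{id}$ and $I$ commutes with $\mathrm{Ref}_{Q_0}$), that $a$ and $c_\tau$ together with $b_\tau$ generate a group of order $2(\tau+1)=2\tau+2$, and that the orbit $\{h\cdot\Sigma_\theta : h\in G_\tau\}$ consists of $2\tau+2$ copies meeting only along the shared geodesics $l_i\subset P_0$, $\eta_j\subset\partial M$. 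Embeddedness of the union follows because each copy lies in its own closed wedge (the $G_\tau$-translates of the fundamental wedge of Lemma \ref{convex} tile a neighborhood of the axis), two copies meet exactly along a common boundary geodesic, and distinct wedges have disjoint interiors; the global surface is thus embedded, and items (3), (4), (5) on the configuration of $\Sigma_\tau\cap\partial M$, $\Sigma_\tau\cap P_0$, and the dihedral symmetry are read off directly from the reflection construction and the angles $\theta=\frac\pi{\tau+1}$.

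For the genus computation I would use the symmetry: $\Sigma_\tau$ is tiled by $2\tau+2$ copies of the disk $\Sigma_\theta$, glued in pairs along $2\tau+2$ polar-axis-meridian arcs on the horizon and $2\tau+2$ meridian rays on $P_0$; alternatively, view $\Sigma_\tau$ as invariant under the dihedral group, so its quotient is a disk and one can invoke the Riemann–Hurwitz formula, with ramification over the two poles (fixed by the full cyclic subgroup) and over the ``endpoints'' of the fixed arcs. Compactifying the two ends (each end is asymptotic to $P_0$, hence a plane with quadratic area growth and finite total curvature by Theorem \ref{piece}, so each end compactifies by adding a point) and running Euler characteristic bookkeeping — the poles, the $2\tau+2$ equatorial vertices, the boundary arcs, and the $2\tau+2$ disk faces — yields $\chi = 2 - 2\tau$ for the closed-up surface, i.e. genus $\tau$. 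Properties (1) and (2) are inherited: $\Sigma_\tau$ is a finite union of isometric copies of $\Sigma_\theta$, each of which has finite total curvature and quadratic area growth by Theorem \ref{piece}, and since $I$ is an isometry sending neighborhoods of infinity to neighborhoods of the horizon and $\Sigma_\theta$ is asymptotic to $P_0$, the patched surface is asymptotic to the totally geodesic plane $P_0$ at both ends.

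The main obstacle I expect is the genus count, specifically making the Euler-characteristic bookkeeping rigorous: one must correctly identify the cell structure of the glued surface (how many copies share each polar vertex, exactly which boundary arcs get identified, and that no extra handles or self-intersections are created), justify that each of the two ends is topologically a once-punctured disk so that the compactification adds exactly two points, and confirm that the $C^1$ matching from the reflection principle indeed produces a $C^\infty$ (in fact real-analytic) minimal surface with no hidden singularity along the gluing loci — in particular checking the first-order matching at the two poles, where $\tau+1$ copies meet and the naive angle sum is $2\pi$, so that the surface is genuinely smooth there and not merely a branched point.
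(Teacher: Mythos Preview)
Your overall strategy matches the paper's: assemble $\Sigma_\tau$ as the $G_\tau$-orbit of $\Sigma_\theta$, read off (3)--(5) from the reflection construction, inherit (1) from Theorem~\ref{piece}, and compute the genus at the end. The paper's proof is extremely terse and follows exactly this outline, and the extra detail you supply on the reflection principle and embeddedness is reasonable elaboration of what the paper leaves implicit. Your direct Euler-characteristic count via the tiling by $2\tau+2$ disks is a legitimate substitute for the paper's one-line ``by the Gauss--Bonnet theorem,'' provided you are careful about the number of ends when compactifying. One correction on the group: $b_\tau=\mathrm{Ref}_{Q_\theta}\circ\mathrm{Ref}_{P_0}$ is the rotation by $\pi$ about the horizontal line $Q_\theta\cap P_0$ (precisely the Schwarz $\pi$-rotation across the boundary ray $l_\theta$), not a rotation by $2\theta$ about the polar axis; likewise $a=\mathrm{Ref}_{Q_0}\circ I$ is the $\pi$-rotation about the horizon meridian $\eta_0\subset Q_0\cap\partial M$ (both $\mathrm{Ref}_{Q_0}$ and $I$ fix $\eta_0$ pointwise), not the reflection across $\partial M$ itself. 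Getting these right matters for verifying that $|G_\tau|=2\tau+2$ and that adjacent copies glue along the correct boundary geodesic.

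There is, however, a genuine gap in your treatment of (2). You assert that ``$\Sigma_\theta$ is asymptotic to $P_0$'' and deduce the planar asymptotics of $\Sigma_\tau$ from that, but nothing proved so far gives this: Theorem~\ref{piece} yields only finite total curvature and quadratic area growth, and the barrier arguments of Section~\ref{sec3} do not by themselves force the height over $P_0$ to tend to zero. The paper closes this gap by invoking an external classification, \cite[Corollary~1.2]{BR}, which says that a complete embedded minimal end of finite total curvature and quadratic area growth in an asymptotically flat manifold is either bounded or has logarithmic growth; combining this with property~(4) (the end contains the straight rays $l_i\subset P_0$) rules out logarithmic growth and forces the end to be planar. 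Without this ingredient, or a substitute for it, your argument for (2) is incomplete.
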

\begin{proof}
By the construction process, (3), (4), and (5) are clear. Note that $\eta_i$'s and $l_i$'s satisfy $\cup_{i=1}^{2\tau+2}\left(\eta_i\cup l_i\right)=\cup_{h\in G_\tau}h\cdot\Gamma_{\frac{\pi}{\tau+1}}$. Since $\Sigma_\tau$ is a finite union of isometric copies of $\Sigma_{\frac{\pi}{\tau+1}}$, (1) follows from Theorem \ref{piece}. On the other hand, in \cite[Corollary 1.2]{BR}, asymptotic behaviors of complete embedded minimal ends of finite total curvature and quadratic area growth in asymptotically flat spaces have been classified: the ends must either be bounded or have logarithmic growth. Properties (1) and (4) imply that $\Sigma_\tau$ has a planar end. Finally, the genus of $\Sigma_\tau$ can be easily calculated in terms of $\theta$ by the Gauss-Bonnet theorem.
\end{proof}


\end{document}